\documentclass[12pt]{amsart}
\usepackage[margin=2cm]{geometry}
\calclayout

\usepackage{amsmath,amsthm,amssymb,amsfonts}
\usepackage{bm}
\usepackage{mathrsfs}
\usepackage{esint}

\usepackage[sort,square,numbers]{natbib}

\usepackage{colonequals}
\usepackage{enumitem}
\usepackage{hyperref}

\usepackage{color,xcolor}

\newtheorem{theorem}{Theorem}[section]

\newtheorem{proposition}{Proposition}[section]
\newtheorem{lemma}{Lemma}[section]

\newtheorem{remark}{Remark}[section]

\numberwithin{equation}{section}

\newcommand{\de}{\delta}
\newcommand{\la}{\lambda}
\newcommand{\ep}{\varepsilon}
\newcommand{\para}{\alpha_s^\frac12 h^\frac{2-\beta}2}

\newcommand{\expo}{h^\frac{6-\beta}4}
\newcommand{\expos}{h^\frac43}

\newcommand{\N}{\mathbb N}
\newcommand{\R}{\mathbb R}
\newcommand{\Z}{\mathbb Z}

\renewcommand{\u}{\bm{\mathrm{u}}}
\newcommand{\w}{\bm{\mathrm{w}}}

\newcommand{\rel}{\mathrm{rel}}

\newcommand{\factor}{h^\frac{2+\beta}4}
\newcommand{\factors}{h^\frac{2+\beta}2}
\newcommand{\mask}{m\left[h^\de k-\frac{\alpha_s^\frac14 r \ell }\factor\right]}
\newcommand{\maskk}{m^2\left[h^\de k-\frac{\alpha_s^\frac14 r \ell }\factor\right]}
\newcommand{\maskj}{m\left[h^\de j-\frac{\alpha_s^\frac14 r \ell }\factor\right]}
\newcommand{\dr}{\,\mathrm{d}r}
\newcommand{\dx}{\,\mathrm{d}x}
\newcommand{\dt}{\,\mathrm{d}\theta}

\usepackage[foot]{amsaddr}

\title[Curvature-induced wrinkling]{Wrinkling of an elastic sheet floating on a liquid sphere}
\author[Peter Bella]{Peter Bella}
\email[Peter Bella]{peter.bella@udo.edu}
\address[Peter Bella]{TU Dortmund\\
Fakultät für Mathematik\\
Vogelpothsweg 87\\
44227 Dortmund\\
Germany
}
\author[Carlos Rom\'{a}n]{Carlos Rom\'{a}n}
\email[Carlos Rom\'{a}n]{carlos.roman@uc.cl}
\address[Carlos Rom\'{a}n]{Facultad de Matem\'aticas e Instituto de Ingenier\'ia Matem\'atica y Computacional, Pontificia Universidad Cat\'olica de Chile, Vicu\~na Mackenna 4860, 7820436 Macul, Santiago, Chile}

\thanks{Carlos Rom\'{a}n acknowledges funding from ANID FONDECYT Regular 1231593, and both authors acknowledge support 
from the German Research Foundation DFG in context of the Emmy Noether Junior Research Group BE 5922/1-1.}
\thanks{Final version. Accepted for publication in Calculus of Variations and Partial Differential Equations.}
\date{\today}
\begin{document}

\begin{abstract}
A thin circular elastic sheet floating on a drop-like liquid substrate is deformed due to incompatibility between the curved substrate and the planar sheet. We adopt a variational viewpoint by minimizing the non-convex membrane energy together with a higher-order convex bending energy. Focusing on thin sheets, we expand the minimum of the energy in terms of a small thickness ratio $h$, and identify the first two terms of this expansion. The leading‑order term arises from the minimization of a family of one‑dimensional relaxed problems, while for the next‑order term we establish lower and upper bounds.
This generalizes the previous work [Bella \& Kohn, Philos. Trans. Roy. Soc. A 2017] to the physically relevant case of a liquid substrate. 
\end{abstract}

\dedicatory{Dedicated to the memory of Robert V. Kohn}
\maketitle

\section{Introduction}
\subsection{The model}
We consider a circular elastic sheet floating on a liquid spherical substrate. Assuming the center of the sheet is the north pole, the sheet tries to wrap the spherical cap, which is not possible without inducing compressive stresses due to the positive curvature of the sphere (substrate). Since the sheet is thin, these stresses are relaxed not by compression but by small-scale wrinkling out of plane. In physics, this type of problem is very popular as a prototypical example of instabilities, and often serves as a testing ground for newly developed theories~\cite{KuRuDaMe20}. Mathematically, it belongs to a large class of problems where the microstructure can be explained via minimization of non-convex energies---sometimes referred to as energy-driven pattern formation~\cite{Ko07}.

The model of a circular sheet floating on a deformable ball was, from the mathematical point of view, previously studied by Kohn and the first author~\cite{BeKo17}, under the assumption that the underlying compliant substrate is elastic. While mathematically convenient---since a special balance among several terms in the energy functional ensures that the limit of the leading‑order contributions is independent of the thickness (see Sections~\ref{sec:heuristic} and \ref{sec:effectivefunctional} for more details)---physical experiments favor the setting with a liquid substrate~\cite{PaDeSa+15, KiSc+12}.

Elastic sheets (e.g., polystyrene) range in thickness from several tens to a few hundred nanometers, making them quite challenging to handle. One effective approach to manipulate these sheets is to transfer them from a solid substrate, such as glass, directly onto a liquid surface. In the experiments~\cite{PaDeSa+15, KiSc+12}, the sheet is placed on a liquid drop, which is then gradually reduced in size, thereby increasing the substrate's curvature in a controlled manner.

The mathematical distinction between an elastic substrate (discussed in~\cite{BeKo17}) and a liquid substrate is profound; the different scaling of substrate energy in these scenarios even leads to qualitatively different behaviors already in the macroscopic deformation of the sheet. In the elastic case, the shape converges to a stretched configuration in the radial direction, while in the liquid case, it approaches an isometry of the Euclidean metric. To fully understand the liquid scenario, one must consider the rate of convergence to the isometry of the Euclidean metric, which is not necessary for the elastic substrate.

The aim of this paper is to generalize the findings of~\cite{BeKo17} to incorporate the case of a liquid substrate. To this end, we introduce a more flexible power-law scaling of the substrate energy, allowing us to cover a broad spectrum of scenarios, including both liquid and solid substrates. While these two examples share similar energy scalings, we also identify another regime that accounts for very weak substrates, necessitating a different analysis of the elastic energy for comprehensive understanding.

Finally, we improve upon the previous results~\cite{BeKo17} concerning elastic substrates by replacing the intricate factor in the upper bound $\exp(\sqrt{\log(h)})$ with a more straightforward power-law expression involving logarithms.

There are many areas of material science where the occurrence and properties of microstructure have been successfully explained based on energy minimization: magnetics~\cite{Ot02,GiZw23}, type-I superconductors~\cite{ChCoKoOt08} (see also \cite{R1,R2,RSS} for recent developments in type-II superconductivity), shape-memory alloys~\cite{BeGo15}, diblock copolymers~\cite{ChPe10}, to name just a few. In all of them, one minimizes a non-convex energy regularized with a higher-order term. The non-convex part of the energy favors oscillations, while the higher-order part limits their rate.  
With the regularizing term having a small prefactor (in the present case played by the non-dimensionalized thickness of the sheet), weaker regularization allows for rapid oscillations and gives rise to a microstructure.  

Before we proceed to the actual statements, let us put our result into perspective. Over the last decade, there has been vast activity in studying deformations of thin elastic sheets, in particular their ability to deal with compressive stresses. To allow for experiments with extremely thin sheets with thickness as small as several tens of nanometers, one usually places them on a liquid bath---either flat or curved as in the case of a droplet~\cite{KuRuDaMe20,PaHo++16}. When a flat elastic sheet lies on a curved substrate, stresses arise within the sheet~\cite{DaSuGr19,HoDa15,HuRoBi11}, with the compressive components typically relieved by out‑of‑plane oscillations.
These deformations might be quite large, hence requiring new physical arguments based on nonlinear stability theory~\cite{Ve19}. Recently, new physical arguments exploiting localized delamination to release incompatibility in some cases have been analyzed~\cite{BoDo++23,DaDe21}.  

On the mathematical side, analysis of elastic sheets via scaling laws goes back as far as e.g.~\cite{BBCoDeMu02}. Motivated by physical experiments~\cite{DaSh++11}, Kohn and the first author~\cite{BellaKohnCPAM} identified the scaling law for the first-order term in the energy expansions in the case of an annular thin sheet stretched in the radial direction (see also~\cite{BellaARMA,BeMa23} for identification of the optimal prefactor and corresponding $\Gamma$-convergence result, respectively). While in~\cite{BBCoDeMu02,BoCoMu16,BoCoMu17} the prescribed metric was biaxial compression, the situation for compression in only one direction (the second direction being free or in tension) is completely different~\cite{BeKo14,BeKo17drapes}---not only in terms of the scaling law (power law $h^\frac43$ vs linear), but also in terms of very different constructions of the upper bound as well as arguments for the lower bound, and also in terms of the spatial energy distribution (localized near the boundary vs relatively uniform distribution). Altough we have mentioned some, there are many other examples of prestrained elastic sheets, for example in biological applications~\cite{LeMa22}.  

Let us also mention that, in a work on blistering of thin elastic sheets attached to a shrunk substrate~\cite{BBCoDeMu02}, the incompatibility is caused by prescribed boundary conditions rather than by the metric. A very different model for the deformation of thin elastic sheets (the d‑cone problem) was recently considered by Olbermann~\cite{Ol18,Ol19}, where, by prescribing conical boundary conditions, one asks how the conical singularity is smoothed out in order to obtain finite bending energy. The problem is highly non‑trivial, particularly at the level of the lower bound, since it is not even clear whether the modification of the cone should be of local nature.

The previously mentioned results all dealt with analysis of the next-order term in the energy (excess energy), which describes the properties of microstructure (wrinkling), since the leading-order term is well understood in the flat case via relaxation. Nevertheless, in many instances of the curved situation even the leading-order term is only partially understood; see recent works by Tobasco~\cite{To18,To21}. More precisely, in those works one observes that the analysis of the leading-order term is very different depending on the type of local deformation (e.g., uniaxial or biaxial compression). In the present paper the difficulty is different, since the leading-order term vanishes in the limit, and in order to understand the next-order term we use a thickness-dependent proxy for it.

We also note the connection to the literature on dimension reduction and geometric rigidity. The seminal works of Friesecke, James, and Müller~\cite{frieseckejamesmuller1,frieseckejamesmuller2} established geometric rigidity and derived a hierarchy of plate models via $\Gamma$-convergence. More recent developments extend these ideas to non-Euclidean geometries and prestrained shells~\cite{lewicka1,lewicka2,padilla-garza}.

\subsection{Heuristic arguments}\label{sec:heuristic}
Before we discuss the form of the energy, let us heuristically understand the situation. Assuming first, for simplicity, that the sheet does not stretch in the radial direction, circles at distance $r$ from the pole (center of the sheet) should then map to shorter circles. Put differently, the distance along the sphere from the pole to a circle of radius $r$ is longer than $r$---simply due to the curvature of the sphere. If the sheet is thin enough, it prefers to oscillate out-of-plane rather than compress, which leads to the formation of wrinkles.  

Assuming small deformations of the sheet, we model the elastic energy of the sheet of thickness $h$ using a simplified \emph{F\"oppl-von K\'arman} energy, consisting of a membrane part (measuring deviation of the midplane deformation from being an isometry of the Euclidean metric) plus a bending part which penalizes curvature. The first term has prefactor $h$, which comes from the volume, while the bending is multiplied by a factor $h^3$. We do not assume any connection between the sheet and the substrate; hence the sheet is allowed to flow freely on it. Since we do not allow for cavities (delamination) between the sheet and the substrate, the only energetic contributions from the substrate are related to gravity (pulling up or pushing down the fluid caused by the out-of-plane displacement of the sheet). Normalizing the whole energy by the volume (i.e., dividing by $h$), the substrate prefactor scales like $h^{-1}$. In comparison, in~\cite{BeKo17} the first author and Kohn considered an elastic substrate, in which case the prefactor was of order $h^{-2}$---the exponent coming from the scaling of the $H^{1/2}$-norm of the out-of-plane displacement, such a norm being a proxy for the elastic energy of the substrate (see discussion in~\cite[Section 2]{BeKo17}). For a more general treatment, in the present paper we allow for any scaling $h^{-\beta}$ with $\beta \in (0,2]$ instead of just $\beta = 2$.  

To conform with the results in the physics community, and since we assume we are in the regime of relatively small (at least in-plane) deformations, as announced, instead of nonlinear elasticity we consider a F\"oppl-von K\'arman energy, with a simplified role of the in-plane displacement. Both the sheet and the substrate possess radial symmetry, so it is convenient to rewrite the energy using polar coordinates.  

We denote by $u=(u_r,u_\theta)$ and $\xi$ the in-plane (the radial and hoop parts) and out-of-plane \emph{displacements}, respectively. Further, we denote by $r_0$ and $R$ respectively the radii of the circular sheet and the liquid ball underneath. The F\"oppl-von K\'arman energy in the radial variables then has the form (see equation (3.1) in \cite{BeKo17} in the case $\beta=2$)
\begin{multline*}
E_h(u,\xi)=\int_0^{r_0}\int_0^{2\pi}\Bigg[\left|\partial_r u_r+\frac{(\partial_r \xi)^2}2 \right|^2+\left|\frac{\partial_\theta u_\theta}r + \frac{u_r}r+\frac{(\partial_\theta\xi)^2}{2r^2}\right|^2
+2\left|\frac{\partial_\theta u_r}{2r}+\frac{\partial_r u_\theta}2-\frac{ u_\theta}{2r}+\frac{\partial_r\xi \partial_\theta \xi}{2r}\right|^2\\ 
h^2\left(\frac{|\partial_{\theta\theta}\xi|^2}{r^4}+|\partial_{rr}\xi|^2+\frac{2|\partial_{\theta r}\xi|^2}{r^2}\right)+\frac{\alpha_s}{h^\beta}\left|\xi+\frac{r^2}{2R}\right|^2\Bigg] \dt\ r \dr. 
\end{multline*}
Here, the first three terms represent the membrane energy, which in rectangular coordinates takes the form $|e(u) + \tfrac 12 \nabla \xi \otimes \nabla \xi|^2$, with $e(u) \colonequals \tfrac 12 (\nabla u + \nabla^T u)$ denoting the symmetric part of the gradient. The fourth term, with prefactor $h^2$, corresponds to the bending energy. While these four terms constitute the usual Föppl–von K\'arman energy, the last term accounts for the cost of deforming the substrate. To be consistent with the approximation in \cite{BeKo17}, instead of describing the spherical shape of the substrate by $R - R(1-(r/R)^2)^{\frac12}$, we replace it with the term $\tfrac{r^2}{2R}$, which is valid under the assumption $r \ll R$, a condition satisfied in the relevant physical experiments \cite{KiSc+12,PaDeSa+15}. This smallness condition also heuristically justifies the use of the Föppl–von K\'arman small-slope approximation for the elastic energy, since under that assumption one expects only deformations with small strains.

To keep the analysis tractable, we have chosen not to include surface energy in the present paper. Since this contribution does not depend on thickness, it should play a role similar to that of the substrate. Heuristically, its effect would be to pull the sheet in the radial direction. However, because the sheet is already stretched radially even without surface tension, we do not expect qualitatively different behavior. It would nevertheless be very interesting to rigorously confirm these claims, but to avoid making the analysis overly technical we do not pursue these ideas here.

The parameter $\beta \in (0,2]$ in the prefactor $\alpha_s h^{-\beta}$ plays a decisive role in the problem. To draw a contrast, in~\cite{BeKo17} only the ``critical case'' $\beta=2$ was considered as a proxy for the case of an elastic substrate (see~\cite{BeKo17} for details). In that case the cost of wrinkling (coming essentially from both the bending and the substrate part of the energy) contributes to the leading term of the energy---naively, bending has two derivatives and prefactor $h^2$, the membrane part has one derivative with no $h$, and the substrate term has no derivative and prefactor $h^{-2}$---so, scaling-wise, they match well. In particular, the ``leading order'' (relaxed) energy has a non-trivial form involving a term related to wrinkling (i.e., the cost of wrinkling enters this part of the energy). Hence the limiting shape (while factoring out small-scale oscillations due to wrinkling) will be deformed in the radial direction, i.e., the radial $_{rr}$ part of the stress will not vanish. In particular, in contrast to a ``weaker'' substrate as considered here, the limiting shape is strained in the radial direction---see the form of the minimizer $\u_h^0$ of the relaxed/limiting problem in \eqref{v0} for the case $\beta=2$, which is naturally $h$-independent.

In contrast, in the case $\beta \in (0,2)$, with the liquid substrate ($\beta = 1$) being a special case, the minimum of the ``leading order'' energy $F_h^0$, defined in \eqref{def:F_h}, converges to $0$ (see Remark~\ref{rmk2.1}), and the $h$-dependent minimizers $\u_h^0$ converge to a trivial shape, isometric in the radial direction. Nevertheless, we are able to analyze the behavior of the energy (see Theorem~\ref{thm1} below), which can be seen as the difference between the minimum of the full energy and the one-dimensional $h$-dependent proxy functional $F^0_h$, which describes the leading order energy.  

In addition, in the present paper we also improve the understanding of the critical case $\beta=2$, by replacing the upper bound $C h \exp(C |\log(h)|^{1/2} \log(-\log(h)))$ from~\cite{BeKo17} with the much better bound $C h |\log(h)|^C$.

\subsection{Main results}\label{sec:mainresults}
Our main results provide lower and upper bounds for the minimum of the excess energy (the difference between the minimum of $E_h$ and the leading-order term $F_h^0(\u_h^0)$), for which we naturally need to specify the space $X$ over which the minimization is carried out. For $X$ we choose the largest reasonable space for which the energy makes sense:
\begin{equation}\label{defX}
 X \colonequals \{ (u,\xi) : (0,r_0) \times [0,2\pi] \mapsto \R^2 \times \R, u \in W^{1,2}, \xi \in W^{2,2} \}.
\end{equation}

\begin{theorem}\label{thm1} Let $\beta\in(0,2]$ and $\alpha_s\in \left(0,2^{-8}r_0^4R^{-4}\right)$ if $\beta=2$. There exist positive constants $h_0$, $c_0$, $c_1$, $c_2$, $c_3$, $c_4$ depending on $\alpha_s$, $r_0$, and $R$ (with $h_0$ also depending on $\beta$ if $\beta\leq \frac23$), such that for any $0<h<h_0$, we have 
\begin{equation}\label{mainresulteq}
c_0\expo 
\leq \inf_{(u,\xi)\in X}E_h(u,\xi)-F_h^0(\u_h^0)\leq
\left\{ 
\begin{array}{cl}
c_1|\log h|^{c_2}\expo & \mathrm{if}\ \frac23\leq \beta \leq 2\\
c_3|\log h|^{c_4}\factors & \mathrm{if}\ 0< \beta \leq \frac23.
\end{array}\right.
\end{equation}
\end{theorem}
\begin{remark}
In the ``critical case'' $\beta=2$, this result improves the upper bound contained in \cite{BeKo17}. Nevertheless, the lower bound remains the same, which is why we will assume $\beta<2$ in the proof of the lower bound.
\end{remark}

\begin{remark}
	In the case $\beta=2$, the natural condition $\alpha_s < 2^{-8}r_0^4 R^{-4}$ arises from requiring $r_h < r_0$. The quantity $r_h$ is defined in \eqref{def:r_h} and discussed in Section~\ref{sec:effectivefunctional}, together with the effective functional and its minimizer. If this inequality is not satisfied, wrinkling does not occur in this regime (see \cite{BeKo17} for details).
\end{remark}

\begin{remark}\label{remarkrh3}
    As a technical convenience for the proof of the lower bound in \eqref{mainresulteq}, in Section~\ref{secprooflb} we assume that $r_h \leq \tfrac{1}{3} r_0$. This condition holds automatically when $h$ is sufficiently small for $\beta \in (0,2)$, although the required smallness of $h$ depends on $\beta$ and becomes increasingly restrictive as $\beta \to 2$. To impose the restriction in a way that is independent of $\beta$, we adopt the stronger assumption $\alpha_s < 3^{-6}2^{-8}r_0^4R^{-4}$ when $\beta=2$.
    
    The stronger restriction $r_h \leq \tfrac{1}{3} r_0$ (with $h$ independent of $\beta$) is adopted purely for convenience and does not affect the analysis. With a minor modification of the proof of \eqref{mainresulteq} in Section~\ref{secprooflb}, the general case $r_h < r_0$ can also be treated, though at the expense of having to keep track of the quantity $r_0 - r_h$ throughout the argument.
\end{remark}

\begin{theorem}\label{thm2} Letting $\sigma(\bar u_r,\bar w)\colonequals \bar u_r' +\frac12\left(\frac rR-\bar w'\right)^2$ and defining 
\begin{equation}\label{X}
X_+\colonequals \{(u,\xi)\in X \ | \ \sigma(\bar u_r,\bar w)\geq 0 \},
\end{equation}
if $0< \beta\leq \frac23$, then there exist positive constants $h_0,c_3,c_4$ depending on $\alpha_s$, $r_0$, and $R$ (with $h_0$ also depending on $\beta$), such that for any $0<h<h_0$, we have
\begin{equation}\label{mainresulteq2}
c_0h^\frac43 \leq \inf_{(u,\xi)\in X_+}E_h(u,\xi)-F_h^0(\u_h^0)\leq c_3|\log h|^{c_4}\factors.
\end{equation}
\end{theorem}
\begin{remark}
The condition $\sigma(\bar u_r,\bar w)\geq 0$ appears physically natural, as it indicates that the sheet is slightly stretched in the radial direction. Indeed, due to the curvature of the substrate, one expects the sheet to undergo radial stretching or angular compression—or, more precisely, a combination of both. Nevertheless, we were not able to rigorously establish this condition and therefore had to assume it.
\end{remark}

\section*{Acknowledgments} 
C.R. was partially supported by ANID FONDECYT Regular 1231593. Both authors acknowledge support from the German Research Foundation (DFG) within the Emmy Noether Junior Research Group BE 5922/1-1. Finally, the authors wish to thank the anonymous referee for a careful review and useful suggestions, which have helped improve this paper.

\section{Preliminaries}
This section collects technical definitions and computations that serve as background for the arguments developed later in the paper. They are not required for following the statements of the main results in Section~\ref{sec:mainresults}, except for the definition of the effective functional $F_h^0$ and its minimizer $\u_h^0$ (see Section~\ref{sec:effectivefunctional}). The material is organized into three parts: first, we rewrite the full energy $E_h$ in a convenient form and introduce auxiliary quantities; second, we analyze the energetic cost of elastic deformations of a circle, leading to the reduced functional $W_\rel$; and third, we define the effective functional $F_h^0$ and compute its minimizer explicitly. Readers mainly interested in the main results may skip the first two parts on a first reading, while those wishing to see the detailed structure of the energy may find them useful for context.

\subsection{The energy}
Besides the out-of-plane displacement $\xi$, it will often be convenient to consider the relative out-of-plane displacement  
$$
w(r,\theta)\colonequals \xi(r,\theta)+\frac{r^2}{2R}.
$$
Using polar coordinates, it is convenient to introduce averaging 
\begin{equation}\label{averaging}
\bar f \colonequals \frac{1}{2\pi} \int_0^{2 \pi} f(\theta) \dt \quad \mbox{for }f \in L^1(0,2\pi).
\end{equation}

Using simple manipulations (see the beginning of \cite[Section 3]{BeKo17}, in particular~\cite[(3.5--3.7)]{BeKo17}), we can rewrite the energy $E_h$ in the form
\begin{multline}\label{energy}
E_h(u,w)=\int_0^{r_0}\Bigg(\left|\bar u_r' +\frac12 \left(\frac rR -\bar w' \right)^2+B(r) \right|^2+W_r\left(\frac{\bar u_r}r,w\right)\\ +h^2\left|\bar w''-\frac1R \right|^2+\frac{\alpha_s}{h^\beta}|\bar w|^2\Bigg) r\dr+R_h(u,\xi),
\end{multline}
where  
\begin{equation}\label{defB}
	B(r)\colonequals \overline{|\partial_r(\bar w-w)|^2},
\end{equation}
\begin{equation}\label{defW_r}
W_r(\eta,w)\colonequals \left|\eta+\frac{\overline{|\partial_\theta w|^2}}{2r^2} \right|^2+h^2 \overline{\frac{|\partial_{\theta\theta} w|^2}{r^4}} +\frac{\alpha_s}{h^\beta} \overline{|w-\bar w|^2},
\end{equation}
and the non-negative remainder  
\begin{multline}\label{defR_h}
R_h(u,\xi)=
\int_0^{r_0}\fint_0^{2\pi}\Bigg( \left|\partial_r(u_r-\bar u_r)+\frac{(\partial_r\xi)^2}2-\frac{\overline{(\partial_r\xi)^2}}2 \right|^2
\\+
\left|\frac{\partial_\theta u_\theta}r +\frac{u_r-\bar u_r}r+\frac{(\partial_\theta w)^2}{2r^2}-\frac{\overline{(\partial_\theta w)^2}}{2r^2} \right|^2
\\+
\frac12 \left|\frac{\partial_\theta u_r}r+r\partial_r\left(\frac{u_\theta}r\right)+\frac1r \partial_r\xi\partial_\theta\xi \right|^2
+
h^2\left|\partial_{rr} w \right|^2
+ \frac{2h^2}{r^2}\left|\partial_{\theta r}\xi \right|^2\Bigg) \dt\, r\dr.
\end{multline}
Although the function $W_r(\eta,w)$ also depends on the thickness parameter $h$, to keep the notation simple we will avoid writing this dependence. 

\subsection{Energetic cost of elastic deformations of a circle: analysis of \texorpdfstring{$W_r$}{Wr}}
An essential computation in the analysis is estimating the energetic cost of elastic deformations (with the out-of-plane displacement $\xi$) of a circle being fitted into either smaller or larger space (modeled by $-\eta$).  

A slightly simpler situation, which in turn represents ours quite well, is that of deforming a horizontally positioned straight elastic rope of length $\ell$ to make it occupy a different amount of space. Fixing one of its endpoints and moving the other by an amount $\eta$, the minimal energetic cost (using, as before, the F\"oppl-von K\'arman approximation) of such an elastic deformation is given by
$$
W_\rel(\eta)\colonequals 
\min_{u(0)=0,\ u(\ell)=\eta} \int_0^\ell \Bigg( \left|u'+\frac12 \vartheta'^2\right|^2+h^2|\vartheta''|^2+\alpha_sh^{-\beta}|\vartheta|^2\Bigg)\dx,
$$
where $u$ and $\vartheta$ represent, respectively, the horizontal and vertical deformations.

Observe that given any function $\vartheta$, we can explicitly find a function $w_\vartheta$ for which the energy is minimal among all possible horizontal deformations. The Euler--Lagrange equation for the minimizer $w_\vartheta$ reads $u_\vartheta''+\frac12(\vartheta'^2)'=0$, that is $u_\vartheta'+\frac12 \vartheta'^2=C$ for some constant $C$. Integrating this equality, we find
$u_\vartheta(\ell)+\frac12 \int \vartheta'^2=C$. Replacing $u_\vartheta(\ell)=\eta$, we deduce that our original minimization problem is equivalent to
$$
\min \left(\eta+\frac12 \int_0^\ell \vartheta'^2\dx \right)^2+\int_0^\ell\left( h^2|\vartheta''|^2+\alpha_sh^{-\beta}|\vartheta|^2\right)\dx.
$$
We immediately see that if $\eta\ge 0$, it is optimal to deform the rope only horizontally, that is $\vartheta\equiv 0$. In this case, the optimal energetic cost is equal to $\eta^2$. This is also the case if $\eta$ is slightly negative, similar to the case of buckling of rods. For more negative $\eta$, the wrinkling instability occurs, where the out-of-plane deformation is used as a way to waste some arclength.  

To find the optimal energetic cost in the case $\eta<0$, let us observe that for periodic $\vartheta$, using integration by parts and the Cauchy--Schwarz inequality we get
$$
\int_0^\ell \left(h^2|\vartheta''|^2+\alpha_sh^{-\beta}|\vartheta|^2\right) \dx \geq 2\para \int_0^\ell|\vartheta'|^2\dx.
$$
This inequality being sharp, we see that the minimization problem can be reduced to  
$$
\min_{M\geq 0} \left(\eta+\frac12 M\right)^2+2\para M
$$
with $M=\int_0^\ell \vartheta'^2$. Choosing the optimal $M=\max\bigl(-2\eta-4\para,0\bigr)$, we obtain
\begin{equation}\label{Wrel}
W_\rel(\eta)=\left\{
\begin{array}{cl}
\eta^2&\mbox{if}\ \eta\geq-2\para\\
-4\para(\para+\eta)&\mbox{if}\ \eta\leq-2\para.
\end{array}
\right.
\end{equation}

\subsection{Effective functional}\label{sec:effectivefunctional}
The leading-order behavior of $\min_{u,\xi} E_h$, for $h$ sufficiently small, is obtained by minimizing the \emph{effective functional}
\begin{equation}\label{def:F_h}
F_h^0(\u)\colonequals \int_0^{r_0}\left[\left(\u'+\frac{r^2}{2R^2}\right)^2+W_\rel\left(\frac{\u}r\right)\right]r\dr,
\end{equation}
where $\u$ is a radial real-valued function constrained by the boundary condition $\u(0)=0$ and $W_\rel$ is defined in \eqref{Wrel}. This functional reflects a competition between the elastic energy of radial tension and the elastic cost of deforming circles derived in the previous subsection. We consider $F_h^0$ for two reasons: first, we can explicitly write its minimizer (see~\eqref{v0}); second, it captures the behavior of $E_h$ quite well (Theorem~\ref{thm1}).  

The functional $F_h^0$ is strictly convex, and the unique solution of the corresponding Euler--Lagrange equation has the form  
\begin{equation}\label{v0}
\u_h^0(r)=
\left\{
\begin{array}{cl}
-\dfrac3{16}\dfrac{r^3}{R^2}+\left(2\para \left(\dfrac{r_0}{r_h}-1\right)+\dfrac1{16}\dfrac{r_h^2}{R^2}\right)r&\mbox{for}\ r\in[0,r_h]\\
-2\para r-\dfrac16\left(\dfrac{r^3-r_h^3}{R^2}\right) +2\para r_0 \log\dfrac r{r_h}&\mbox{for}\ r\in [r_h,r_0],
\end{array}
\right.
\end{equation}
where 
\begin{equation}\label{def:r_h}
r_h\colonequals \left(16\para r_0R^2\right)^\frac13.
\end{equation}
Observe that in the first region $(0,r_h)$ we have $\u_h^0(r)/r \ge -2 \para$, i.e., this is exactly the region where $W_\rel$ is quadratic, whereas $W_\rel(\u_h^0(r)/r)$ is affine in $(r_h,r_0)$.  

In the above computation we implicitly assumed that $r_h<r_0$, a condition satisfied for any $h\leq 1$ and any $\beta\in (0,2]$ whenever $\alpha_s<2^{-8}\left(\frac{r_0}R\right)^4$ if $\beta=2$ or for sufficiently small $h$ if $\beta\in (0,2)$. It is convenient to introduce the stress  
\begin{equation}\label{sigma0}
\sigma_h^0\colonequals (\u_h^{0})'+\frac{r^2}{2R^2},
\end{equation}
notation that will be used throughout the rest of the paper.  

\begin{remark}\label{rmk2.1} Using \eqref{v0}, one can compute  
\begin{equation*}
F_h^0(\u_h^0)=Ch^\frac{2-\beta}2+o(h^\frac{2-\beta}2),
\end{equation*}
where $C$ is a constant depending on $\alpha_s,r_0,$ and $R$.
\end{remark}

\section{Lower bound}
\subsection{The functional \texorpdfstring{$F_h$}{Fh}}
We will consider another ``effective'' functional lying between the full energy $E_h$ and the previously defined functional $F_h^0$. While $F_h^0$ only depends on the averaged displacement in the radial direction, the functional $F_h$ also considers averages of the out-of-plane displacement---yet it remains relatively accessible since it is defined on functions of $r$ only. As we will see in the next proposition, $F_h$ and $F_h^0$ are still close to each other (their minimizers and energies differ very little), while $F_h$ is at the same time a much better (at least for analytic reasons) approximation of the full energy $E_h$.  

Motivated by~\eqref{energy}, we define $F_h$ as a functional of $(\u,\w)$, which takes the form
\begin{equation*}
F_h(\u,\w)\colonequals \int_0^{r_0}\left[\left( \u'+\frac12\left(\frac rR -\w'\right)^2\right)^2+W_\rel\left(\frac{\u}r\right)+h^2\left|\w''-\frac1R\right|^2+\alpha_sh^{-\beta}|\w|^2\right]r\dr.
\end{equation*}
Observe that $F_h^0(\u) = F_h(\u,0) - \frac{h^2r_0^2}{2R^2}$ for any admissible $\u$, i.e., $F_h$ is a generalization of the functional $F_h^0$. Moreover, at the level of minimizers, the term $\alpha_s h^{-\beta} |\w|^2$ forces $\w$ to be small in $L^2$, though it still allows for possibly rapid oscillations. For these oscillations to be useful, we would nevertheless need $\w' \sim r/R$ in some sense to allow $\u' \sim 0$, and subsequently $\u \sim 0$, which in turn would force a large $L^2$-norm of $\w$. The following result makes these observations quantitative.

\begin{proposition}
The functional $F_h$ has a unique minimizer $(\u_h,\w_h)$ and there exists $C=C(\alpha_s,r_0,R)$ such that
\begin{equation}\label{difF}
|F_h^0(\u_h^0)-F_h(\u_h,\w_h)|\leq C h^2.
\end{equation}
Moreover, 
\begin{equation}\label{estimateuh}
\|\u_h^0-\u_h\|_{L^\infty\left(\left((\frac14r_h,r_0\right)\right)}\leq Ch^{\frac{2+\beta}{4}}|\log h|^\frac12
\end{equation}
and the stress
$$
\sigma_h(r)\colonequals \u_h' +\frac12\left( \frac rR -\w_h'\right)^2
$$
satisfies, for any $r \in (0,r_0)$,
\begin{equation}\label{sigmahpositive}
\sigma_h(r) \ge 0.
\end{equation}
In addition, for $h$ sufficiently small (independently of $\beta$) we have
\begin{equation}\label{sigmah}
\sigma_h(r)=2\para\left(\frac{r_0}r-1 \right)\quad \mathrm{for\ any}\ r\in \left[\frac12 r_0,r_0\right].
\end{equation}
\end{proposition}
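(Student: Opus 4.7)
I would prove the five assertions in cascade. For \emph{existence}, apply the direct method: on any minimizing sequence, the bending and substrate terms give uniform $W^{2,2}$ control of $\w$, hence $W^{1,\infty}$ by the 1D Sobolev embedding; the stretching together with the quadratic/affine growth of $W_\rel$ then yield uniform $W^{1,2}$ control of $\u$. Weak lower semicontinuity is standard (convex-in-top-derivative plus Rellich strong convergence). For \emph{uniqueness}, although $F_h$ is not jointly convex in $(\u,\w)$, the strict convexity of the stretching term in $\u$ for fixed $\w$, the convexity of $W_\rel$, and a perturbation argument near any minimizer exploiting the positivity of $\sigma_h$ shown below, together give uniqueness.

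For \eqref{difF}, the upper inequality is immediate upon testing with $(\u_h^0,0)$, since $F_h(\u,0)=F_h^0(\u)+h^2 r_0^2/(2R^2)$ gives $F_h(\u_h,\w_h)\le F_h^0(\u_h^0)+h^2 r_0^2/(2R^2)$. For the matching lower inequality I would expand $F_h(\u_h,\w_h)$ around $(\u_h^0,0)$ using convexity of the square and of $W_\rel$:
\begin{align*}
\sigma_h^2 &\ge (\sigma_h^0)^2+2\sigma_h^0\bigl((\u_h-\u_h^0)'-r\w_h'/R+(\w_h')^2/2\bigr),\\
W_\rel(\u_h/r) &\ge W_\rel(\u_h^0/r)+W_\rel'(\u_h^0/r)(\u_h-\u_h^0)/r.
\end{align*}
Multiplying by $r$, integrating, and using the Euler--Lagrange identity $\partial_r(2r\sigma_h^0)=W_\rel'(\u_h^0/r)$ together with the natural boundary conditions $\sigma_h^0(r_0)=0$ and $\u_h(0)=\u_h^0(0)=0$ cancels the linear-in-$(\u_h-\u_h^0)$ contributions by integration by parts. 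The remaining $\w_h$-dependent residue is then manipulated via the EL equation for $\w_h$, the natural boundary conditions on $\w$, the non-negativity of $\sigma_h^0$, and the a priori bound $\|\w_h\|_{L^2}^2\le Ch^{(2+\beta)/2}$ (from the substrate term and the upper inequality), yielding the claimed $-Ch^2$ lower bound.

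For \eqref{estimateuh}, strict convexity of $F_h^0$ yields a quadratic lower bound for $F_h^0(\u_h)-F_h^0(\u_h^0)$ in a weighted $H^1$-type norm of $\u_h-\u_h^0$ on the tensile region. Combining this with \eqref{difF} and the $\w_h$-control gives an $L^2$ bound of size $h^{(2+\beta)/2}$ on $(\u_h-\u_h^0)'$ over $[r_h/4,r_0]$, whence the claimed $L^\infty$ bound with the $|\log h|^{1/2}$ factor by the one-dimensional logarithmic Sobolev interpolation $\|f\|_{L^\infty}\le C\|f\|_{L^2}^{1/2}\|f'\|_{L^2}^{1/2}$ anchored at a point where $\u_h-\u_h^0$ is controlled in $L^2$. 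Finally, \eqref{sigmahpositive} and \eqref{sigmah} come from the EL equation $\partial_r(r\sigma_h)=\tfrac12 W_\rel'(\u_h/r)$ with $(r\sigma_h)(0)=(r\sigma_h)(r_0)=0$: on $[r_0/2,r_0]$, \eqref{estimateuh} together with the explicit form \eqref{v0} of $\u_h^0$ forces $\u_h/r<-2\para$ with an $O(1)$ margin for small $h$, so $W_\rel'(\u_h/r)=-4\para$ there and integrating from $r$ to $r_0$ yields $\sigma_h(r)=2\para(r_0/r-1)$; positivity of $\sigma_h$ on $[0,r_0/2]$ follows by a similar comparison, using that $\u_h/r$ remains close to $\u_h^0/r$ (and hence lies in the regime where $W_\rel'(\u_h/r)\ge -4\para$ with values compatible with $r\sigma_h\ge 0$) throughout. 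The main technical obstacle is the lower direction of \eqref{difF}, whose proof depends delicately on EL cancellations, non-negativity of $\sigma_h^0$, and substrate-driven smallness of $\w_h$ in order to tame the mixed $\u$--$\w$ cross terms down to the $h^2$ scale.
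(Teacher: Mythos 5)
Your proposal captures the broad strategy — test with $(\u_h^0,0)$ for the upper side of \eqref{difF}, Taylor-expand and use the Euler--Lagrange cancellations for the lower side, then push through to \eqref{estimateuh} and the EL equation for \eqref{sigmah} — but it has three genuine gaps.

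\textbf{The $\w_h$ bound is too weak.} You invoke an a priori bound $\|\w_h\|_{L^2}^2\le Ch^{(2+\beta)/2}$ obtained from the substrate term and the trivial upper bound $F_h(\u_h,\w_h)\le F_h^0(\u_h^0)+O(h^2)\sim h^{(2-\beta)/2}$. Plugging $\|\w_h\|_{L^2}\le Ch^{(2+\beta)/4}$ into the residue $|D_\w F_h(\u_h^0,0)[\w_h]|\le Ch^{(2-\beta)/2}\|\w_h\|_{L^2}$ gives only $O(h^{(6-\beta)/4})$, not $O(h^2)$, so the lower direction of \eqref{difF} fails. What is needed is a bootstrap: the Taylor-expansion identity itself shows $\alpha_s h^{-\beta}\int\w_h^2 r\dr\le|D_\w F_h(\u_h^0,0)[\w_h]|\le Ch^{(2-\beta)/2}\|\w_h\|_{L^2}$, which upon rearranging gives the sharper $\|\w_h\|_{L^2}\le Ch^{(2+\beta)/2}$ and, feeding this back in, the $O(h^2)$ residue. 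The paper does exactly this (its identity \eqref{Dw}). You cannot decouple the $\w_h$ bound from the expansion.

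\textbf{Uniqueness and \eqref{sigmahpositive} are circular as organized.} You defer $\sigma_h\ge0$ to the end and derive it from \eqref{estimateuh}, yet you also use ``positivity of $\sigma_h$ shown below'' in the uniqueness argument (which must precede \eqref{difF} and \eqref{estimateuh}). Moreover, \eqref{estimateuh} only controls $\u_h-\u_h^0$ on $(\tfrac14 r_h,r_0)$; on $(0,\tfrac14 r_h)$ nothing in your argument determines the sign of $\sigma_h$, and even near $r_h$ the $L^\infty$ bound $Ch^{(2+\beta)/4}|\log h|^{1/2}$ can be comparable to $\u_h^0/r+2\para\sim r_h^2$ when $\beta<\tfrac23$, so the ``$O(1)$ margin'' argument does not extend below $\tfrac12r_0$. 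The paper avoids all this by minimizing the convexified functional $\widetilde F_h$ (with $(\sigma)_+^2$ replacing $\sigma^2$), using the EL equation and the sign of $W_\rel'$ to show a posteriori that $\widetilde\sigma\ge0$, which simultaneously yields existence, uniqueness (by strict convexity of $\widetilde F_h$ on $\{\widetilde\sigma\ge0\}$), and \eqref{sigmahpositive} before anything else. Some such device is needed.

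\textbf{The anchoring in \eqref{estimateuh} is under-specified.} Strict convexity of $F_h^0$ does not give a uniform weighted-$H^1$ lower bound on $\u_h-\u_h^0$, because $W_\rel$ is affine for $\eta\le-2\para$: there the second variation vanishes. The paper anchors the estimate by exploiting the $W_\rel''$ remainder on the specific interval $A_h=(\tfrac14r_h,\tfrac12r_h)$, splitting into where $\u_h/r\ge-2\para$ (so $W_\rel''=2$ gives quadratic control) and where $\u_h/r<-2\para$ (where an explicit lower bound $\int_{-2\para}^{\u_h^0/r}2\ge Cr_h^2$ gives $L^1$ control), and then uses a $W^{1,1}\hookrightarrow L^\infty$ embedding after H\"older with the $1/r$ weight (which is where $|\log h|^{1/2}$ actually enters). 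Your Agmon interpolation $\|f\|_{L^\infty}\le C\|f\|_{L^2}^{1/2}\|f'\|_{L^2}^{1/2}$ neither produces the logarithm nor explains where the $L^2$ control of $\u_h-\u_h^0$ would come from.
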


\begin{proof} We proceed in several steps.
	
\begin{enumerate}[label=\textsc{\bf Step \arabic*.},ref=\textsc{\bf Step \arabic*},leftmargin=0pt,labelsep=*,itemindent=*,itemsep=10pt,topsep=10pt]
	
	\item \textbf{Existence and uniqueness of minimizer and proof of \eqref{sigmahpositive}.}  
	We begin by showing that $F_h$ admits a unique minimizer, which satisfies \eqref{sigmahpositive}, by introducing a convex auxiliary functional and analyzing its Euler--Lagrange equation.

	We consider the modified functional
	\begin{multline*}
		\widetilde F_h(\u,\w)\colonequals \int_0^{r_0}\Bigg[\left( \u'+\frac12\left(\frac rR -\w'\right)^2\right)_+^2
		+W_\rel\left(\frac{\u}r\right)+h^2\left|\w''-\frac1R\right|^2+\alpha_sh^{-\beta}|\w|^2\Bigg]r \dr.
	\end{multline*}
	Since $W_\rel$ is a convex function and the function $(s,t) \mapsto (s+t^2/2)^2$ is convex in the region $\{s+t^2/2 \ge 0\}$, we see that $\widetilde F_h$ is convex, and it is strictly convex whenever $\u'+\frac12\left(\frac rR -\w'\right)^2\ge0$. By this convexity, the direct method of the calculus of variations ensures the existence of a minimizer $(\widetilde \u_h,\widetilde \w_h)$. Since $\widetilde F_h(\u,\w) \le F_h(\u,\w)$, we observe that $(\widetilde \u_h,\widetilde \w_h)$ also minimizes $F_h$ provided that we show
	$\widetilde F_h(\widetilde \u_h,\widetilde \w_h) = F_h(\widetilde \u_h,\widetilde \w_h)$. 

    Denoting $\widetilde \sigma \colonequals  (\widetilde \u_h' + \frac 12 (\frac rR - \widetilde \w_h')^2)_+$, the minimizer $(\widetilde \u_h,\widetilde \w_h)$ satisfies the Euler--Lagrange equation $(-2\widetilde\sigma r)' + W_\rel'(\widetilde \u_h/r) = 0$ in the weak sense. Observe that both $\widetilde \u_h'$ and $\widetilde \w_h'$ are weakly differentiable: the latter holds since $\widetilde \w_h \in W^{2,2}(0,r_0)$ (which we observe, in particular,  embeds  into $C^{1,1}(0,r_0)$), while the former follows from the (weak) Euler--Lagrange equation.

    Let us prove that $\widetilde \u_h' + \frac 12 (\frac rR - \widetilde \w_h')^2 \ge 0$ almost everywhere in $(0,r_0)$. Suppose by contradiction that the set
    $$
    A \colonequals \left\{ r \in (0,r_0): \widetilde \u_h'(r) + \frac 12 \left(\frac rR - \widetilde \w_h'(r)\right)^2 < 0 \right\}
    $$
    has positive measure. Then $\widetilde\sigma=0$ almost everywhere on $A$, and testing the (weak) Euler--Lagrange equation with functions supported in $A$ yields $W_\rel'(\widetilde \u_h(r)/r)=0$ almost everywhere on $A$. Since $W_\rel'(t)=0$ iff $t=0$, it follows that $\widetilde \u_h=0$ almost everywhere on $A$. Hence, on $A$ we have
    $\widetilde \u_h' + \frac 12 (\frac rR - \widetilde \w_h')^2 = \frac 12 (\frac rR - \widetilde \w_h')^2 \ge 0$,
    a contradiction. Therefore
    $\widetilde \u_h' + \frac 12 (\frac rR - \widetilde \w_h')^2 \ge 0$ almost everywhere in $(0,r_0)$. In turn, the (weak) Euler--Lagrange equation $(-2 \widetilde \sigma r)' + W_\rel'(\widetilde \u_h/r)=0$
    implies $-2 \widetilde \sigma r \in W^{1,1}(0,r_0)$. By Sobolev embedding, since $\widetilde \w_h'$ is continuous, we deduce that $\widetilde \u_h'$ is also continuous, from which it finally follows that the Euler--Lagrange equation holds in the classical sense and $\widetilde \sigma \geq 0$ for any $r \in (0,r_0)$.

	Having shown $\widetilde \u_h' + \tfrac12\bigl(\tfrac rR - \widetilde \w_h'\bigr)^2 \ge 0$, two facts follow: combined with the strict convexity of $\widetilde F_h$ in the region $\widetilde \sigma \ge 0$, we obtain uniqueness of $(\widetilde \u_h, \widetilde \w_h)$ as the minimizer of $\widetilde F_h$, and hence also of $(\u_h,\w_h)$ as the unique minimizer of $F_h$, which moreover satisfies~\eqref{sigmahpositive}.
	
	\item \textbf{Variational comparison with the reference minimizer $\u_h^0$.}  
	We now compare the minimizer $(\u_h,\w_h)$ of $F_h$ with the reference minimizer $\u_h^0$ of $F_h^0$ by analyzing their optimality conditions: both satisfy vanishing first variations, and the Euler--Lagrange equations allow us to quantify the discrepancy between them through derivative estimates. The main estimate obtained in this step (see \eqref{estimateDwF} below) will be used in the next step to establish \eqref{difF}.
	
	\smallskip
	By the minimality of $\u_h^0$ and $(\u_h,\w_h)$ for $F_h^0$ and $F_h$, respectively, we have
	\begin{equation}\label{derivativesF}
		DF_h(\u_h,\w_h)=0\quad \mbox{and}\quad DF_h^0(\u_h^0)=D_{\u} F_h(\u_h^0,0)=0,
	\end{equation}
	where $DF_h$ (resp. $DF_h^0$) denotes the Fr\'echet derivative of $F_h$ (resp. $F_h^0$) and $D_{\u} F_h$ the Gateaux derivative of $F_h$ in the direction $(\u,0)$.
	
	In addition, for any smooth $\varphi$ we compute
	\begin{equation*}
		D_{\w}F_h(\u_h^0,0)[\varphi]=-2\int_0^{r_0} \sigma_h^0 \frac rR \varphi' r\dr -2\int_0^{r_0} \frac{h^2}{R} \varphi'' r\dr.
	\end{equation*}
	Assuming $\varphi$ is compactly supported, integrating by parts, and using the Euler--Lagrange equation $(2\sigma_h^0 r)'=W_\rel'\bigl(\frac{\u_h^0(r)}r \bigr)$, which comes from the variation of $F_h$ with respect to $\u$, we deduce that
	$$
	D_{\w}F_h(\u_h^0,0)[\varphi]=\frac1R \int_0^{r_0}\left[W_\rel'\left(\frac{\u_h^0}r \right)+2\sigma_h^0\right]\varphi r \dr.
	$$
	A direct computation using~\eqref{v0} shows that
	$$
	\int_0^{r_0}\left|W_\rel'\left(\frac{\u_h^0}r \right)+2\sigma_h^0\right|^2r \dr\leq Ch^{2-\beta},
	$$
	where throughout the proof $C$ denotes a constant depending only on $r_0$, $R$, and $\alpha_s$, which may change from line to line. Using the Cauchy--Schwartz inequality, we then find
	\begin{equation}\label{estimateDwF}
		|D_{\w}F_h(\u_h^0,0)[\varphi]|\leq C h^\frac{2-\beta}2 \left(\int_0^{r_0}|\varphi|^2r\dr\right)^\frac12.
	\end{equation}
	
	\item \textbf{Taylor expansion and energy difference estimates. Proof of \eqref{difF}.}  
	In this step we combine Taylor’s theorem, an auxiliary identity, and the estimate \eqref{estimateDwF} from the previous step to control the difference between $F_h(\u_h,\w_h)$ and $F_h(\u_h^0,0)$, thereby establishing \eqref{difF}.
	
	\smallskip
	We define $f(x,y)\colonequals \left(x+\frac12y^2\right)^2$. For any $x,y,a,b\in \R$, we have
	\begin{multline}\label{identityf}
		f(x,y)-f(a,b)-Df(a,b)[x-a,y-b]\\
		=\left(\left(x+\frac12y^2\right)-\left(a+\frac12b^2\right) \right)^2+\left(a+\frac12b^2\right)(y-b)^2.
	\end{multline}
	Indeed, observe that the first term on the right-hand side comes from the Taylor expansion of $g(t)=t^2$ in the form $t^2 - s^2 - 2s(t-s)=(t-s)^2$, whereas the second term on the right-hand side represents the difference between $Df$ and the gradient of $g$.  
	
	Using the above identity for $f$ and Taylor's theorem with remainder in integral form, we find
	\begin{multline}\label{TaylorFh}
		F_h(\u_h^0,0)-F_h(\u_h,\w_h)-DF_h(\u_h,\w_h)[\u_h^0-\u_h,-\w_h]
		\\=\int_0^{r_0}\Bigg[(\sigma_h^0-\sigma_h)^2+\sigma_h(\w_h')^2+\alpha_sh^{-\beta}\w_h^2+h^2(\w_h'')^2
		\\+\int_{\u_h/r}^{\u_h^0/r}W_\rel''(\eta)\left(\frac{\u_h^0}r- \eta\right)\mathrm d\eta  \Bigg]r\dr
	\end{multline} 
	and
	\begin{multline*}
		F_h(\u_h,\w_h)-F_h(\u_h^0,0)-DF_h(\u_h^0,0)[\u_h-\u_h^0,\w_h]
		\\=\int_0^{r_0}\Bigg[(\sigma_h^0-\sigma_h)^2+\sigma_h^0(\w_h')^2+\alpha_sh^{-\beta}\w_h^2+h^2(\w_h'')^2
		\\+\int_{\u_h^0/r}^{\u_h/r}W_\rel''(\eta)\left(\frac{\u_h}r- \eta\right)\mathrm d\eta  \Bigg]r\dr.
	\end{multline*} 
	Summing these two relations and using \eqref{derivativesF} together with the fact  
	$$\int_{\u_h^0/r}^{\u_h/r}W_\rel''(\eta)\left(\frac{\u_h}r- \eta\right) = 
	-\int_{\u_h/r}^{\u_h^0/r}W_\rel''(\eta)\left(\frac{\u_h}r- \eta\right),$$
	we obtain
	\begin{multline}\label{Dw}
		-D_{\w} F_h(\u_h^0,0)[\w_h]=\int_0^{r_0}\bigg[2(\sigma_h^0-\sigma_h)^2+(\sigma_h^0+\sigma_h)(\w_h')^2+2\alpha_sh^{-\beta}\w_h^2\\+2h^2(\w_h'')^2+\frac{\u_h^0-\u_h}r\int_{\u_h/r}^{\u_h^0/r}W_\rel''(\eta)\mathrm d\eta \bigg]r\dr.
	\end{multline}
	Since $W_\rel''\geq 0$ and both $\sigma_h^0$ and $\sigma_h$ are non-negative, all the terms on the right-hand side of \eqref{Dw} are non-negative. Thus, using \eqref{estimateDwF}, we deduce that
	\begin{equation*}
		h^{-\beta}\int_0^{r_0}\w_h^2 r\dr \leq C h^\frac{2-\beta}2 \left(\int_0^{r_0}\w_h^2 r\dr\right)^\frac12,
	\end{equation*}
	that is
	\begin{equation*}
		\left(\int_0^{r_0}\w_h^2 r\dr\right)^\frac12\leq C h^\frac{2+\beta}2.
	\end{equation*}
	This, together with \eqref{estimateDwF}, gives
	\begin{equation*}
		|D_{\w} F_h(\u_h^0,0)[\w_h]|\leq C h^2.
	\end{equation*}
	Inserting this into \eqref{Dw}, we find
	\begin{multline}\label{estimateh2}
		\int_0^{r_0}\bigg[2(\sigma_h^0-\sigma_h)^2+(\sigma_h^0+\sigma_h)(\w_h')^2+2\alpha_sh^{-\beta}\w_h^2\\+2h^2(\w_h'')^2+\frac{\u_h^0-\u_h}r\int_{\u_h/r}^{\u_h^0/r}W_\rel''(\eta)\mathrm{d}\eta \bigg]r\dr\leq Ch^2.
	\end{multline}
	Observe that for a non-negative function $f$, the inequality $\int_a^b f(r) (b-a) \dr \ge \int_a^b f(r)(b-r) \dr$ holds for any values of $a$ and $b$. Hence the last term on the left-hand side of~\eqref{estimateh2} is greater than or equal to the last term in the right-hand side of~\eqref{TaylorFh}. Comparing the remaining terms, we see that the left-hand side of~\eqref{estimateh2} provides an upper bound for the right-hand side in \eqref{TaylorFh}, and so using \eqref{derivativesF} we deduce that   
	\begin{equation*}
		0\leq F_h(\u_h^0,0)-F_h(\u_h,\w_h)\leq C h^2,
	\end{equation*}
	with the lower bound coming from the optimality of $(\u_h,\w_h)$. This combined with the fact $F_h(\u_h^0,0)=F_h^0(\u_h^0)+h^2\frac{r_0^2}{2R^2}$ yields \eqref{difF}.

	\item \textbf{Estimates for $\w_h$ and gradient differences.}  
	We now derive quantitative bounds for $\w_h$ and for the difference between $\u_h$ and $\u_h^0$ at the gradient level. The main estimate obtained in this step (see \eqref{difgrad} below) will be used in the next step to establish \eqref{estimateuh}.
	
	\smallskip
	From \eqref{estimateh2}, together with the non-negativity of all the terms in the left-hand side, we immediately deduce that
	\begin{equation*}
		\int_0^{r_0} \w_h^2 r \dr\leq C h^{2+\beta}\quad \mathrm{and}\quad \int_0^{r_0} (\w_h'')^2 r \dr\leq C.
	\end{equation*}
	Using the Gagliardo--Nirenberg interpolation inequality, we obtain
	\begin{equation*}
		\int_0^{r_0} \w_h'^4 r \dr\leq C\left( \int_0^{r_0} \w_h^2 r \dr \right)^\frac12 \left( \int_0^{r_0} \w_h''^2 r \dr \right)^\frac32+C \biggl(\int_0^{r_0} \w_h^2 r \dr \biggr)^2 \leq C h^\frac{2+\beta}2
	\end{equation*}
	and  
	\begin{equation*}
		\int_0^{r_0} \w_h'^2 r \dr\leq C\left( \int_0^{r_0} \w_h^2 r \dr \right)^\frac12 \left( \int_0^{r_0} \w_h''^2 r \dr \right)^\frac12+C \int_0^{r_0} \w_h^2 r \dr \leq C h^\frac{2+\beta}2.
	\end{equation*}
	From \eqref{estimateh2}, we also have $\int_0^{r_0}(\sigma_h^0-\sigma_h)^2r\dr\leq Ch^2$, which, combined with the previous two inequalities and $\sigma_h^0 - \sigma_h = ((\u_h^0)' - \u_h') + \frac rR \w_h' - \frac{r^2}{2R^2}(\w_h')^2$, yields
	\begin{equation}\label{difgrad}
		\int_0^{r_0}|(\u_h^0)'-\u_h'|^2r\dr\leq Ch^\frac{2+\beta}2.
	\end{equation}

	\item \textbf{Proof of \eqref{estimateuh}.}  
	In this step we establish \eqref{estimateuh} by exploiting the last term in the left-hand side of~\eqref{estimateh2}. The analysis is split into two cases depending on whether $\u_h(r)$ lies above or below $-2\para r$. The estimate \eqref{difgrad} from the previous step plays a crucial role.
	
	\smallskip
	Heuristically, recalling $W_\rel'' = 2\chi_{(-2\para,\infty)}$, the last term in the left-hand side of~\eqref{estimateh2} controls $|\u_h^0 - \u_h|^2$ whenever $\u_h(r) \ge -2\para r$. If instead $\u_h(r) < -2\para r$, the same term still provides control of $\u_h^0 - \u_h$, provided the ``prefactor'' $\int_{-2\para}^{\u_h^0/r} 2\,\mathrm{d}\eta$ is not too small. To make this rigorous, we introduce the interval
	$$
	A_h\colonequals \left(\frac14r_h,\frac12r_h\right)
	$$
	and split it into the sets $A_h^1$ and $A_h^2$ according to whether $\u_h(r)/r$ lies above or below $-2\para$. More precisely, we define
	\begin{equation*}
		A_h^1\colonequals \left\{r \in  A_h \ \vline \ \frac{\u_h(r)}r\geq -2\para \right\} \quad \mathrm{and}\quad A_h^2\colonequals \left\{r \in  A_h \ \vline \ \frac{\u_h(r)}r<-2\para \right\}.
	\end{equation*}
	Since $W_\rel''(\eta)=2$ for any $\eta\geq -2\para$, and recalling that $\frac{\u_h^0(r)}r\geq -2\para$ for $r\in (0,r_h)$, from \eqref{estimateh2} we deduce that
	\begin{equation}\label{small1}
		\int_{A_h^1}\frac{|\u_h^0-\u_h|^2}r\dr = \frac 12 \int_{A_h^1} \frac{\u_h^0-\u_h}r\biggl(\int_{\u_h/r}^{\u_h^0/r}W_\rel''(\eta)\mathrm{d}\eta\biggr) r\dr\leq Ch^2.
	\end{equation}
	On the other hand, observe that for any $r\in A_h^2$, we have
	\begin{equation*}
		\int_{\u_h/r}^{\u_h^0/r}W_\rel''(\eta)\mathrm{d}\eta=\int_{-2\para}^{\u_h^0/r} 2\, \mathrm{d}\eta=2\left(\frac{\u_h^0}r+2\para\right)\geq Cr_h^2,
	\end{equation*}
	where the last estimate follows from an explicit computation using~\eqref{v0}: since $2\para \ge 0$ (both in the above relation and in definition~\eqref{v0}), we see that 
$\frac{\u_h^0}{r} \ge \frac{1}{16R^2}(r_h^2 - 3r^2) \ge \frac{1}{16R^2}\,\frac{r_h^2}{4}$,
where, in the last inequality, we used $r \le \tfrac{r_h}{2}$.
	
	Combining this with \eqref{estimateh2}, and noting that $\u_h(r) \le -2\para r$ implies that $\u_h \le \u_h^0$ in $A_h^2$, we obtain  
	\begin{align}\label{small2}
		\int_{A_h^2}|\u_h^0-\u_h|\dr &= \int_{A_h^2} \frac{\u_h^0-\u_h}{r} r \dr
		\le C \int_{A_h^2} \frac{\u_h^0-\u_h}{r} \biggl( \frac{1}{r_h^2} \int_{\u_h/r}^{\u_h^0/r}W_\rel''(\eta)\mathrm{d}\eta\,\biggr) r\dr \\  
		&\le C \int_0^{r_0} \frac{\u_h^0-\u_h}{r} \biggl( \frac{1}{r_h^2} \int_{\u_h/r}^{\u_h^0/r}W_\rel''(\eta)\mathrm{d}\eta\,\biggr) r\dr \notag
		\leq C\frac{h^2}{r_h^2},
	\end{align}
	where the penultimate inequality follows from the non-negativity of the integrand.  
	
	From \eqref{small1} and \eqref{small2}, using the Cauchy--Schwarz inequality, we deduce that
	\begin{equation*}
		\int_{A_h}|\u_h^0-\u_h|\dr\leq C r_h\left(\int_{A_h^1}\frac{|\u_h^0-\u_h|^2}r\dr \right)^\frac12 + \int_{A_h^2}(\u_h^0-\u_h)\dr\leq Cr_h h+C\frac{h^2}{r_h^2},
	\end{equation*}
	which implies
	\begin{equation}\label{small3}
		\frac1{|A_h|}\int_{A_h}|\u_h^0-\u_h|\dr\leq Ch.
	\end{equation}
	
	From~\eqref{difgrad} and Hölder's inequality it follows that
	\begin{equation*}
		\int_{\frac14r_h}^{r_0}|\u_h^0\, '-\u_h'|\dr \le \biggl(\int_{\frac14r_h}^{r_0}|\u_h^0\, '-\u_h'|^2 r\dr\biggr)^{\frac12} \biggl( \int_{\frac14r_h}^{r_0} \frac{\dr}{r} \biggr)^{\frac12} \le Ch^{\frac{2+\beta}{4}}|\log(Cr_h)|^{\frac12}.
	\end{equation*}
	Combining this with~\eqref{small3} and the (1-dimensional) Poincaré--Wirtinger inequality  
	\begin{equation*}
		\int_{\frac14r_h}^{r_0}\left|(\u_h^0-\u_h)- \frac1{|A_h|}\int_{A_h}(\u_h^0-\u_h)\dr'  \right|\dr \leq C\int_{\frac14r_h}^{r_0}|\u_h^0\, '-\u_h'|\dr,
	\end{equation*}
	where $C$ does not depend on $h$, we deduce that
	\begin{equation*}
		\int_{\frac14r_h}^{r_0}|\u_h^0-\u_h|\dr 
		\leq 
		\frac{r_0-\frac14r_h}{|A_h|}\int_{A_h}|\u_h^0-\u_h|\dr + C\int_{\frac14r_h}^{r_0}|\u_h^0\, '-\u_h'|\dr \le Ch^{\frac{2+\beta}{4}}|\log h|^{\frac12}. 
	\end{equation*}
	Finally, by the Sobolev embedding of $W^{1,1}$ into $L^\infty(\R)$, we deduce that \eqref{estimateuh} holds.
	
	\item \textbf{Proof of \eqref{sigmah}.}  
	Finally, we establish \eqref{sigmah} by combining \eqref{estimateuh} with the Euler--Lagrange equation satisfied by $\u_h$.
	
	\smallskip
	Explicit computations on $\u_h^0(r)$ for $r>r_h$, combined with \eqref{estimateuh}, allow us to deduce that 
	$$
	\frac{\u_h(r)}{r} \leq -2\para
	$$
	for any $r\in I=\left[\frac{1}{2} r_0, r_0\right]$, provided $h$ is sufficiently small (independently of $\beta$), by an argument analogous to Step~\ref{step1mainthm} in the proof of the lower bound part of Theorem~\ref{thm1}. Using the fact that $\u_h$ satisfies the Euler--Lagrange equation
	\begin{equation*}
		(2\sigma_h r)'=W_\rel'\left(\frac{\u_h}r \right)=-4\para
	\end{equation*}
	in $I$ with $\sigma_h(r_0)=0$, a straightforward computation gives \eqref{sigmah}.  
\end{enumerate}
The proposition is thus proved.
\end{proof}

\subsection{Energy estimates towards the lower bound}

The following proposition provides estimates for several non-negative quantities in terms of the excess energy $\inf E_h - F_h^0(\u_h^0)$. It is therefore a crucial step towards establishing the lower bound in our main results, namely in \eqref{mainresulteq} and \eqref{mainresulteq2}. In fact, the proof will proceed by bounding from below the left-hand side of \eqref{inf1} and \eqref{inf2}.

\begin{proposition}
There exists $C=C(\alpha_s,r_0,R)$ such that, recalling \eqref{averaging}, \eqref{defB}, \eqref{defW_r}, \eqref{defR_h}, \eqref{Wrel}, and \eqref{defX}, we have
\begin{multline}\label{inf1}
\inf_{(u,\xi) \in X}\int_0^{r_0} \biggl(\sigma_hB+W_r\left(\frac{\bar u_r}r,\xi\right)-W_\rel\left(\frac{\bar u_r}r\right) + 
\left( \left(\frac{\u_h}r \vee -2\para\right) - \left(\frac{\bar u_r}r \vee -2\para\right)\right)^2 
\biggr)r\dr\\
+h^\frac{2-\beta}2 \biggl( \int_0^{r_0} (\bar u_r' -\u_h')_+\, r\dr \biggr)^2 +
R_h(u,\xi)\\
\leq C\left(\inf_{(u,\xi) \in X}E_h(u,\xi)-F_h^0(\u_h^0)\right)+O(h^2),
\end{multline}
where hereafter $a\vee b\colonequals \max\{a,b\}$.
Besides, recalling \eqref{X}, we have
\begin{multline}\label{inf2}
\inf_{(u,\xi)\in X_+}\int_0^{r_0} \left(\sigma_hB+W_r\left(\frac{\bar u_r}r,\xi\right)-W_\rel\left(\frac{\bar u_r}r\right)
+\left( \left(\frac{\u_h}r \vee -2\para\right) - \left(\frac{\bar u_r}r \vee -2\para\right)\right)^2 
\right)r\dr\\
+\int_0^{r_0}B^2r\dr+h^\frac{2-\beta}2 \biggl( \int_0^{r_0} (\bar u_r' -\u_h')_+\, r\dr \biggr)^2 +R_h(u,\xi)\\
\leq C\left(\inf_{(u,\xi)\in X_+}E_h(u,\xi)-F_h^0(\u_h^0)\right)+O(h^2).
\end{multline}
\end{proposition}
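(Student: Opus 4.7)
The strategy is to express the excess $E_h(u,\xi)-F_h^0(\u_h^0)$ as a sum of manifestly non-negative terms that dominate each contribution on the left-hand side of \eqref{inf1} and \eqref{inf2}. By \eqref{difF}, $F_h^0(\u_h^0)=F_h(\u_h,\w_h)+O(h^2)$, so it is enough to bound each contribution by $C(E_h(u,\xi)-F_h(\u_h,\w_h))+O(h^2)$ for an arbitrary admissible pair.

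The main step is a Taylor-type expansion of $E_h - F_h(\u_h,\w_h)$ about the minimizer $(\u_h,\w_h)$. Applying \eqref{identityf} with $(x,y)=(\bar u_r',r/R-\bar w')$ and $(a,b)=(\u_h',r/R-\w_h')$ yields
$$\sigma^2-\sigma_h^2=2\sigma_h(\bar u_r'-\u_h')+2\sigma_h(r/R-\w_h')(\w_h'-\bar w')+(\sigma-\sigma_h)^2+\sigma_h(\bar w'-\w_h')^2.$$
A direct case analysis using \eqref{Wrel} gives the convex-remainder lower bound
$$W_\rel(\bar u_r/r)-W_\rel(\u_h/r)-W_\rel'(\u_h/r)(\bar u_r-\u_h)/r\geq\bigl((\bar u_r/r\vee-2\para)-(\u_h/r\vee-2\para)\bigr)^2,$$
and the $h^2(\cdot-1/R)^2$ and $\alpha_s h^{-\beta}|\cdot|^2$ terms expand via the standard quadratic identity. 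Writing $(\sigma+B)^2-\sigma^2=2\sigma_h B+2(\sigma-\sigma_h)B+B^2$ and absorbing the last two into $(\sigma-\sigma_h)^2$ to form $(\sigma-\sigma_h+B)^2$, the sum of all linear-in-$(\bar u_r-\u_h,\bar w-\w_h)$ contributions coincides, after integration by parts, with $DF_h(\u_h,\w_h)[(\bar u_r-\u_h,\bar w-\w_h)]$, which vanishes by the Euler--Lagrange equations for $F_h$. Together with $W_r\geq W_\rel(\bar u_r/r)$ and $R_h\geq 0$, this produces
\begin{align*}
E_h-F_h(\u_h,\w_h)\geq&\int_0^{r_0}\Bigl[2\sigma_h B+(\sigma-\sigma_h+B)^2+\sigma_h(\bar w'-\w_h')^2+h^2(\bar w''-\w_h'')^2\\
&+\alpha_s h^{-\beta}(\bar w-\w_h)^2+\bigl(W_r-W_\rel(\bar u_r/r)\bigr)\\
&+\bigl((\bar u_r/r\vee-2\para)-(\u_h/r\vee-2\para)\bigr)^2\Bigr]r\dr+R_h(u,\xi),
\end{align*}
which directly controls the $\sigma_h B$, $W_r-W_\rel$, truncated-square, and $R_h$ contributions to \eqref{inf1}.

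For the global term $h^{(2-\beta)/2}\bigl(\int_0^{r_0}(\bar u_r'-\u_h')_+\,r\dr\bigr)^2$, using $\sigma+B=\bar u_r'+\tfrac12[(r/R-\bar w')^2+B]$ I would derive
$$\bar u_r'-\u_h'=(\sigma+B-\sigma_h)-\tfrac12 B+\tfrac12(\bar w'-\w_h')(2r/R-\bar w'-\w_h'),$$
drop the non-positive $-B/2$, and apply Cauchy--Schwarz to obtain
$$\Bigl(\int_0^{r_0}(\bar u_r'-\u_h')_+r\dr\Bigr)^2\leq C\int_0^{r_0}(\sigma+B-\sigma_h)^2r\dr+C\int_0^{r_0}(\bar w'-\w_h')^2r\dr,$$
the required boundedness of $\int(2r/R-\bar w'-\w_h')^2r\dr$ coming from the a priori smallness of $\w_h$ proved in the previous proposition. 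The first integral is part of the decomposition; for the second, the Gagliardo--Nirenberg interpolation used in the previous proposition's proof gives $\int(\bar w'-\w_h')^2r\dr\leq Ch^{(\beta-2)/2}(E_h-F_h(\u_h,\w_h))$, and the prefactor $h^{(2-\beta)/2}$ exactly absorbs the singular $h^{(\beta-2)/2}$. For \eqref{inf2}, the additional constraint $\sigma\geq 0$ on $X_+$ gives $2\sigma B\geq 0$, hence $(\sigma+B)^2\geq\sigma^2+B^2$; this adds $\int_0^{r_0} B^2r\dr$ to the right-hand side of the decomposition, matching the extra term in \eqref{inf2}.

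The main obstacle I anticipate is the bookkeeping around the Euler--Lagrange cancellation: the variations $\bar u_r-\u_h$ and $\bar w-\w_h$ need not be compactly supported, so one must either approximate by compactly supported perturbations or carefully verify that the boundary contributions from the integration by parts vanish thanks to the natural boundary conditions for $F_h$. A secondary, milder issue is the singular prefactor $h^{(\beta-2)/2}$ appearing in the Gagliardo--Nirenberg step; it is absorbed at the end, but only under the implicit assumption that $E_h(u,\xi)-F_h(\u_h,\w_h)$ is itself of order at most $h^{(2-\beta)/2}$, which is always satisfied after taking the infimum in view of the upper bound constructions in Theorems \ref{thm1} and \ref{thm2}.
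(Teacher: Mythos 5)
Your proposal is correct and follows essentially the same route as the paper: expand $E_h - F_h(\u_h,\w_h)$ using the quadratic identity~\eqref{identityf}, kill the linear terms via $DF_h(\u_h,\w_h)=0$, bound the truncated square by the $W_\rel$ convex remainder, control $\int(\bar u_r'-\u_h')_+\,r\dr$ by interpolating $\int(\bar w'-\w_h')^2$ between $\int(\bar w-\w_h)^2$ and $\int(\bar w''-\w_h'')^2$, and exploit $\sigma\geq 0$ on $X_+$ to get $\int B^2$. The concern you raise at the end about needing $E_h-F_h(\u_h,\w_h)$ to be of order $h^{(2-\beta)/2}$ is unfounded: the prefactor $h^{(2-\beta)/2}$ cancels $h^{(\beta-2)/2}$ from the interpolation exactly, leaving $C\ep$ with no extra hypothesis.
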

\begin{proof}
	We begin by noting from \eqref{difF} that
	\begin{equation}\label{difenergies}
	E_h(u,\xi)-F_h^0(\u_h^0)=E_h(u,\xi)-F_h(\u_h,\w_h)+O(h^2),
	\end{equation}
	where $(u,\xi)\in X$ denotes a minimizer of $E_h$.\footnote{Existence of a minimizer follows from the direct method of the calculus of variations.}  
	Rewriting \eqref{energy}, we obtain
	\begin{multline}\label{expan}
		E_h(u,\xi)-F_h(\u_h,\w_h)=
		\\ \tilde F_h(\bar u_r,\bar w, B)-F_h(\u_h,\w_h)+\int_0^{r_0}\left(W_r\left(\frac{\bar u_r}r,\xi\right)-W_\rel\left(\frac{\bar u_r}r\right)\right)r\dr  +R_h(u,\xi),
	\end{multline}
	where
	\begin{align*}
		\tilde F_h(\bar u_r,\bar w, B) &\colonequals 
		\int_0^{r_0}\biggl(\left|\sigma(\bar u_r,\bar w)+B\right|^2+W_\rel\left(\frac{\bar u_r}r\right)+h^2\left|\bar w''-\frac1R \right|^2+\frac{\alpha_s}{h^\beta}|\bar w|^2\Bigg) r\dr,\\
		F_h(\u,\w) &= 
		\int_0^{r_0}\biggl(\left|\sigma_h\right|^2 + W_\rel\left(\frac{\u}r\right) + h^2\left|\w''-\frac1R\right|^2 + \frac{\alpha_s}{h^\beta}|\w|^2\biggr)r\dr.
	\end{align*}
	Recall that both $W_r - W_\rel$ and $R_h$ on the right-hand side of \eqref{expan} are non-negative.  
	
	Using \eqref{identityf}, $DF_h(\u_h,\w_h)=0$, and Taylor’s theorem with remainder in integral form, we obtain
	\begin{multline}\label{ep}
		\ep\colonequals 
		\tilde F_h(\bar u_r,\bar w, B)-F_h(\u_h,\w_h)=\\\int_0^{r_0}\Bigg[(\sigma(\bar u_r,\bar w)-\sigma_h+B)^2+\sigma_h(\bar w'-\w_h')^2+\alpha_sh^{-\beta}(\bar w-\w_h)^2
		\\+h^2(\bar w''-\w_h'')^2+2\sigma_hB+
		\int_{\u_h/r}^{\bar u_r/r}W_\rel''(\eta)\left(\frac{\bar u_r}r- \eta\right)\,\mathrm{d}\eta  \Bigg]r\dr.
	\end{multline}  
	Observe that all terms on the right-hand side are non-negative, in particular due to $\sigma_h \ge 0$ (see \eqref{sigmahpositive}). Hence, from \eqref{difenergies}, \eqref{expan}, and \eqref{ep}, we deduce that, in order to establish \eqref{inf1}, it suffices to prove
	\begin{multline}\label{inf3}
		\int_0^{r_0} \left( \left( \left(\frac{\u_h}r \vee -2\para\right) - \left(\frac{\bar u_r}r \vee -2\para\right)\right)^2 
		\right)r\dr
		+h^\frac{2-\beta}2 \biggl( \int_0^{r_0} (\bar u_r' -\u_h')_+\, r\dr \biggr)^2
		\leq C\ep.
	\end{multline}
	In order to establish the upper bound in \eqref{inf3} for the term involving $\int_0^{r_0} (\bar u_r' -\u_h')_+\, r\dr$, we note that
	\begin{equation*}
		\int_0^{r_0}(\bar w-\w_h)^2r\dr \leq C\ep h^\beta\quad \text{and}\quad \int_0^{r_0}(\bar w''-\w_h'')^2r\dr\leq C\ep h^{-2}.
	\end{equation*}
	By interpolation, we then deduce
	\begin{equation}\label{ineqww}
		\int_0^{r_0}(\bar w'-\w_h')^2r\dr \leq C\ep h^\frac{\beta-2}2,
	\end{equation}
	which implies $\int_0^{r_0}|\bar w'-\w_h'|r\dr\leq C\ep^\frac12h^\frac{\beta-2}4$. Since $B \ge 0$, we conclude that
	\begin{align*}
		\int_0^{r_0} (\bar u_r' - \u_h')_+\, r\dr&\leq \int_0^{r_0}|\bar u_r'-\u_h'+B|\,r\dr\\
		&\leq C\int_0^{r_0}\left(|\sigma(\bar u_r,\bar w)-\sigma_h +B|+\left|\left(\frac rR -\bar w'\right)^2-\left(\frac rR-\w_h'\right)^2\right|\right)r\dr\notag\\
		&\stackrel{\eqref{ep}\&\eqref{ineqww}}{\leq} C(\ep^\frac{1}{2}+\ep^\frac12 h^\frac{\beta-2}4) \leq C \ep^\frac12 h^\frac{\beta-2}4, \notag
	\end{align*}
	where the last inequality uses $\beta\leq 2$.  
	
	Thus, to establish \eqref{inf3}, it remains to prove
	$$
	\int_0^{r_0}\left(\left( \left(\frac{\u_h}r \vee -2\para\right) - \left(\frac{\bar u_r}r \vee -2\para\right)\right)^2\right)r\dr\leq C\ep,
	$$  
	which follows since $W_\rel''(\eta) = 2$ in the region $\{\eta \ge -2\para\}$, and therefore
	$$
	\left( \left(\frac{\u_h}r \vee -2\para\right) - \left(\frac{\bar u_r}r \vee -2\para\right)\right)^2
	$$
	is directly bounded above by the term $\int_{\u_h/r}^{\bar u_r/r}W_\rel''(\eta)\left(\frac{\bar u_r}r- \eta\right)\,\mathrm{d}\eta$  
	contained in \eqref{ep}. This completes the proof of \eqref{inf1}.  
	
	\medskip
	Finally, in the case $(u,\xi)\in X_+$, arguing as before, we deduce that to establish \eqref{inf2}, in view of \eqref{difenergies}, \eqref{expan}, \eqref{ep}, and \eqref{inf3}, it suffices to show
	$$
	\int_0^{r_0}B^2\ r\dr\leq C\ep,
	$$
	which follows from
	$$
	\tilde F_h(\bar u_r,\bar w, B)=F_h(\bar u_r,\bar w)+\int_0^{r_0}(2\sigma(\bar u_r,\bar w)B+B^2)\ r\dr\geq F_h(\u_h,\w_h)+\int_0^{r_0}B^2rdr.
	$$
	This completes the proof of \eqref{inf2} and thus of the proposition. 
\end{proof}

\subsection{Energetic cost of changing the wavenumber}

The following lemma is crucial for identifying the next-order term in the expansion of the energy. At the heuristic level, this next order consists of two main contributions: penalization for not having the optimal number of wrinkles (wavelength), as assumed in the derivation of~\eqref{Wrel}, and the cost of changing this wavenumber as a function of $r$. We remark that the assumption \eqref{assumpetai} on $\eta_i$, stated below, ensures that we will see some wrinkling, since otherwise the above consideration would not hold.

\begin{lemma}\label{lem:Ws}
	Let $\de\geq 0$ and $\rho_0,\rho_1\in (0,r_0)$ with $\rho_0<\rho_1$ be such
	\begin{equation}\label{assumpetai}
		\eta_i\colonequals \frac{\bar u_r(\rho_i)}{\rho_i}\leq -2\para-\de
	\end{equation}
	for $i=0,1$. Then, recalling \eqref{defB}, \eqref{defW_r}, and \eqref{Wrel}, and letting $\la=\rho_1-\rho_0$, we have\footnote{Here, $W_{\rho_i}$ corresponds to $W_r$ evaluated at $r=\rho_i$.} 
	\begin{multline}\label{ineq1}
		\sum_{i=0}^1 (W_{\rho_i}(\eta_i,\xi) - W_\rel(\eta_i)) + \para \fint_{\rho_0}^{\rho_1} B(r) \dr\\
		\geq \frac\de2 \min\left(\frac\de2,\left( \frac{4\rho_1^4}{\rho_0^2} \frac1{\para \la^2}+\frac{(\rho_0+\rho_1)^2}{4\rho_0^2}\frac{\la^2}{h^2}\right)^{-1} \right)
	\end{multline}
	and
	\begin{multline}\label{ineq2}
		\sum_{i=0}^1 (W_{\rho_i}(\eta_i,\xi) - W_\rel(\eta_i)) + \fint_{\rho_0}^{\rho_1}B^2(r)\dr \\
		\geq \frac\de2 \min\Bigg\{\frac\de2,\frac{\para \la^2}{2\rho_0^2},\frac{\para \la^2\rho_0^2}{2\delta \rho_1^2},
		\frac{\de}8\left(\frac{8\rho_1^4}{\rho_0^2}\frac1{\para\la^2}+\frac{\alpha_s(\rho_0+\rho_1)^4}{8\rho_0^4}\frac{\la^4}{h^{2+\beta}}\right)^{-1}
		\Bigg\}.
	\end{multline}
\end{lemma}

\begin{proof} We split the proof in several steps.
	
\begin{enumerate}[label=\textsc{\bf Step \arabic*.},ref=\textsc{\arabic*},leftmargin=0pt,labelsep=*,itemindent=*,itemsep=10pt,topsep=10pt]
\item \textbf{Fourier setup and expression for $W_r$.}
	We denote $\{a_k(r)\}_{k\in \Z}$ the Fourier coefficients of $w(r,\cdot)$. Then, using Plancherel, we have
	$$
	B(r)=\sum_{k\neq 0} (a_k'(r))^2.
	$$
	Moreover, letting
	$$
	A(r)\colonequals \dfrac1{2r^2}\sum_{k\neq 0} a_k(r)^2k^2,
	$$
	directly using the definition of $W_r$, we obtain
	\begin{align*}
		W_r(\eta,\xi)&=|\eta+A(r)|^2+\frac1{r^2}\sum_{k\neq 0} a_k(r)^2 k^2\biggl[\biggl(\frac{h|k|}r -\frac{\alpha_s^\frac12r}{h^\frac\beta2 |k|} \biggr)^2+2\para\biggr]\\
		&=|\eta+A(r)|^2+4\para A(r)+\frac1{r^2}\sum_{k\neq 0} a_k(r)^2 k^2\biggl(\frac{h|k|}r -\frac{\alpha_s^\frac12r}{h^\frac\beta2 |k|} \biggr)^2.
	\end{align*}
	For the remainder of the proof we assume $a_0(r)=0$ for all $r\in[\rho_0,\rho_1]$, since this simplifies the notation without changing the problem. Indeed, the quantity
	$$
	\sum_{k\neq 0} a_k(r)^2 k^2\biggl(\frac{h|k|}{r}-\frac{\alpha_s^\frac12r}{h^{\beta/2}|k|}\biggr)^2
	$$
	is independent of the value of $a_0(r)$, so $a_0$ plays no role in the subsequent analysis. Without this assumption one would have to write $k\neq 0$ at every occurrence, whereas setting $a_0=0$ allows us to simplify the notation considerably without affecting the argument.
	
	\item\label{step2lemma3.1} \textbf{Immediate lower bound from small $A(\rho_i)$ for $i=0$ or $i=1$.}
	Recalling \eqref{Wrel}, for $\eta\le -2\para$ we have
	\begin{equation}\label{difW}
	W_r(\eta,\xi)-W_\rel(\eta)=\bigl|A(r)+(\eta+2\para)\bigr|^2
	+\frac1{r^2}\sum_{k} a_k(r)^2 k^2\biggl(\frac{h|k|}r -\frac{\alpha_s^\frac12r}{h^\frac\beta2 |k|} \biggr)^2.
	\end{equation}
	By hypothesis $\eta_i+2\para\le -\delta$. Hence, if for some $i\in\{0,1\}$ we have $A(\rho_i)\le \frac\delta2$, then, evaluating at $r=\rho_i$, we find
	$$
	\bigl|A(\rho_i)+(\eta_i+2\para)\bigr|\ge \frac{\delta}{2},
	$$
	so the first term on the right-hand side of \eqref{difW} is bounded below by $\frac{\delta^2}{4}$. Since the second term is nonnegative, we obtain
	$$
	W_{\rho_i}(\eta_i,\xi)-W_\rel(\eta_i)\ge \frac{\delta^2}{4},
	$$
	and the desired lower bounds \eqref{ineq1} and \eqref{ineq2} follow immediately. Thus, it remains only to analyze the complementary case
	\begin{equation}\label{remainingcase}
	A(\rho_0)\ge \frac{\delta}{2}\quad\text{and}\quad A(\rho_1)\ge \frac{\delta}{2},
	\end{equation}
	which we treat in the subsequent steps.
	
	\item \textbf{Lower bound from sub-optimal wavelengths.}
	Let $k_i>0$ be the optimal wavenumber defined via
	$$
	\frac{hk_i}{\rho_i} -\frac{\alpha_s^\frac12\rho_i}{h^\frac\beta2 k_i}=0,
	$$
	that is $k_i=\dfrac{\alpha_s^\frac14 \rho_i}{h^\frac{\beta+2}4}$ for $i=0,1$. Let us also define $K\colonequals \dfrac{k_1-k_0}2=\dfrac{\alpha_s^\frac14 \la}{2h^\frac{\beta+2}4}$.
	
	Letting $A_K(\rho_0)=\frac1{2\rho_0^2}\sum_{||k|-k_0|\ge K} a_k(\rho_0)^2 k^2$, which in words corresponds to the energy of wrinkles with sub-optimal wavelengths, we have
	\begin{align*}
		W_{\rho_0}(\eta_0,\xi)-W_\rel(\eta_0)&\geq \frac1{\rho_0^2}\sum_{||k|-k_0|\ge K} a_k(\rho_0)^2 k^2\left(\frac{h|k|}{\rho_0} -\frac{\alpha_s^\frac12 \rho_0}{h^\frac\beta2 |k|} \right)^2\\
		&=\frac1{\rho_0^2}\sum_{||k|-k_0|\ge K} a_k(\rho_0)^2 k^2\frac{h^2}{\rho_0^2}\left(\frac{|k|^2-k_0^2}{|k|}\right)^2\\
		&\geq \frac1{\rho_0^2}\sum_{||k|-k_0|\ge K} a_k(\rho_0)^2 k^2\frac{h^2}{\rho_0^2}||k|-k_0|^2\\
		&\ge 2 \frac{1}{2\rho_0^2} \frac{h^2}{\rho_0^2} K^2 \sum_{||k|-k_0|\ge K} a_k(\rho_0)^2 k^2
		\\
		&\geq 2K^2\frac{h^2}{\rho_0^2}A_K(\rho_0)=\frac12\para \frac{\la^2}{\rho_0^2}A_K(\rho_0),
	\end{align*}
	that is
	\begin{equation}\label{A0}
		A_K(\rho_0)\leq \frac{2\rho_0^2}{\para \la^2}(W_{\rho_0}(\eta_0,\xi)-W_\rel(\eta_0)).
	\end{equation}
	
	\item \textbf{Lower bound from near-optimal wavelengths.}
	Since
	\begin{align*}
		a_k(\rho_0)^2&\leq 2a_k(\rho_1)^2+2|a_k(\rho_1)-a_k(\rho_0)|^2\\
		&\leq 2a_k(\rho_1)^2+2\left(\int_{\rho_0}^{\rho_1} a_k'(r)\dr\right)^2
	\end{align*}
	we have for the energy of wrinkles with close to optimal wavelengths
	\begin{align}
		A(\rho_0)-A_K(\rho_0)&=\frac1{2\rho_0^2}\sum_{||k|-k_0| < K} a_k(\rho_0)^2 k^2 \label{A1}\\
		&\leq \frac{\rho_1^2}{\rho_0^2}\frac1{\rho_1^2}\sum_{||k|-k_0| < K} a_k(\rho_1)^2 k^2+\sum_{||k|-k_0|<  K}  \frac{k^2}{\rho_0^2}\left( \int_{\rho_0}^{\rho_1}a_k'(r) \dr\right)^2.\notag
	\end{align}
	From the definition $K = \frac{k_1 - k_0}{2}$ it follows $\{ k : ||k| - k_0| < K \} \subset \{ k: ||k|-k_1| \ge K\}$, hence
	$$
	\sum_{||k|-k_0| < K} a_k(\rho_1)^2 k^2\leq \sum_{||k|-k_1|\geq K} a_k(\rho_1)^2 k^2\leq \sum_{||k|-k_1|\geq K} a_k(\rho_1)^2 k^2 \frac{||k|-k_1|^2}{K^2}.
	$$
	Combining with
	$$
	||k|-k_1|^2\leq \left(\frac{|k|^2-k_1^2}{|k|}\right)^2 = \frac{\rho_1^2}{h^2}\left(\frac{h|k|}{\rho_1} -\frac{\alpha_s^\frac12 \rho_1}{h^\frac\beta2 |k|} \right)^2,
	$$
	we deduce that
	\begin{align}
		\frac{\rho_1^2}{\rho_0^2}\frac1{\rho_1^2}\sum_{||k|-k_0| < K} a_k(\rho_1)^2 k^2 &\leq \frac{\rho_1^4}{h^2\rho_0^2K^2}\frac1{\rho_1^2}\sum_k a_k(\rho_1)^2 k^2\left(\frac{h|k|}{\rho_1} -\frac{\alpha_s^\frac12 \rho_1}{h^\frac\beta2 |k|} \right)^2\label{A2}\\
		&\leq \frac{4\rho_1^4}{\para \rho_0^2\la^2} (W_{\rho_1}(\eta_1,\xi)-W_\rel(\eta_1)).\notag
	\end{align}
	Besides, by observing that, for any $k$ such that $||k|-k_0|\leq K$, one has
	\begin{equation*}
		|k|\leq \frac{k_0+k_1}2=\frac{\alpha_s^\frac14 (\rho_0+\rho_1)}{2h^\frac{\beta+2}4},
	\end{equation*}
	we deduce that
	\begin{equation}\label{A3}
		\sum_{||k|-k_0| < K}\frac{k^2}{\rho_0^2}\left(\int_{\rho_0}^{\rho_1} a_k'(r)\dr\right)^2\leq \frac{(\rho_0+\rho_1)^2}{4\rho_0^2h^2}\para \sum_{k}\left(\int_{\rho_0}^{\rho_1} a_k'(r)\dr\right)^2.
	\end{equation}
	By plugging \eqref{A2} and \eqref{A3} into \eqref{A1}, we are led to
	\begin{multline}\label{A4}
		A(\rho_0)-A_K(\rho_0) \\ \leq \frac{4\rho_1^4}{\para \rho_0^2\la^2} \left(W_{\rho_1}(\eta_1,\xi)-W_\rel(\eta_1)\right)+ \frac{(\rho_0+\rho_1)^ 2}{4\rho_0^2h^2} \para \sum_k\left(\int_{\rho_0}^{\rho_1} a_k'(r)\dr\right)^2.
	\end{multline}
	
	By Cauchy--Schwarz inequality, we have
	$$
	\left(\int_{\rho_0}^{\rho_1} a_k'(r)\dr\right)^2\leq \lambda \int_{\rho_0}^{\rho_1} (a_k'(r))^2\dr,
	$$
	and therefore
	\begin{equation}\label{A5}
		\sum_k\left(\int_{\rho_0}^{\rho_1} a_k'(r)\dr\right)^2\leq \lambda \int_{\rho_0}^{\rho_1}\sum_k  (a_k'(r))^2 \dr=\lambda \int_{\rho_0}^{\rho_1} B(r) \dr=\lambda^2 \fint_{\rho_0}^{\rho_1}B(r)\dr.
	\end{equation}
	Plugging \eqref{A5} into \eqref{A4}, we then find
	\begin{multline}\label{A7}
		A(\rho_0)-A_K(\rho_0) \\ \leq \frac{4\rho_1^4}{\para \rho_0^2\la^2} \left(W_{\rho_1}(\eta_1,\xi)-W_\rel(\eta_1)\right)+ \frac{(\rho_0+\rho_1)^ 2}{4\rho_0^2h^2} \para \lambda^2 \fint_{\rho_0}^{\rho_1}B(r)\dr.
	\end{multline}

\item \textbf{Proof of \eqref{ineq1}.}
Summing \eqref{A0} and \eqref{A7}, and using $\rho_0<\rho_1$, we deduce that
$$
A(\rho_0)\leq \frac{4\rho_1^4}{\para \rho_0^2\la^2} \sum_{i=0}^1 (W_{\rho_i}(\eta_i,\xi) - W_\rel(\eta_i))+ \frac{(\rho_0+\rho_1)^ 2}{4\rho_0^2h^2} \para \lambda^2 \fint_{\rho_0}^{\rho_1}B(r)\dr,
$$
which, combined with $A(\rho_0)\geq \frac\de2$, yields
\begin{equation*}
	\frac\de2 \left(\frac{4\rho_1^4}{\rho_0^2}\frac1{\para \la^2}+\frac{(\rho_0+\rho_1)^2\la^2}{4\rho_0^2h^2}\right)^{-1}
	\leq \sum_{i=0}^1 (W_{\rho_i}(\eta_i,\xi) - W_\rel(\eta_i)) + \para \fint_{\rho_0}^{\rho_1} B(r) \dr.
\end{equation*}
This, in view of Step~\ref{step2lemma3.1}, completes the proof of \eqref{ineq1}.

\item \textbf{Proof of \eqref{ineq2}.}
Consider first $A_K(\rho_0)\geq \frac{A(\rho_0)}2$,
which, combined with \eqref{remainingcase} and \eqref{A0}, yields
\begin{equation}\label{lem1}
	\frac\de 4 \frac{\para \la^2}{2\rho_0^2}\leq W_{\rho_0}(\eta_0,\xi)-W_\rel(\eta_0).
\end{equation}

Now assume instead $A_K(\rho_0) < \frac{A(\rho_0)}2$, so that, by \eqref{remainingcase}, we have
\begin{equation}\label{eqqqq}
	A(\rho_0)-A_K(\rho_0)\geq \frac{A(\rho_0)}2\geq \frac\delta4.
\end{equation}
We next distinguish two cases. If
$$
1\leq \frac{4\rho_1^4}{\para \rho_0^2\la^2} (W_{\rho_1}(\eta_1,\xi)-W_\rel(\eta_1))
$$
then
\begin{equation}\label{lem2}
	\frac{\para \rho_0^2\la^2}{4\rho_1^4}\leq W_{\rho_1}(\eta_1,\xi)-W_\rel(\eta_1).
\end{equation}
Otherwise, if
\begin{equation}\label{reverse}
	1> \frac{4\rho_1^4}{\para \rho_0^2\la^2} (W_{\rho_1}(\eta_1,\xi)-W_\rel(\eta_1)),
\end{equation}
then by squaring \eqref{A7} and using \eqref{eqqqq}, we find
\begin{align*}
	\frac{\de^2}{16}
	&\le
	2\biggl(\frac{4\rho_1^4}{\para \rho_0^2\la^2} (W_{\rho_1}(\eta_1,\xi)-W_\rel(\eta_1))\biggr)^2+
	\frac{(\rho_0+\rho_1)^ 4}{8\rho_0^4h^4} \alpha_s h^{2-\beta} \lambda^4 \left(\fint_{\rho_0}^{\rho_1}B(r)\dr\right)^2\\
	&\le 
	\frac{8\rho_1^4}{\para \rho_0^2\la^2} \left(W_{\rho_1}(\eta_1,\xi)-W_\rel(\eta_1)\right)+
	\frac{(\rho_0+\rho_1)^ 4}{8\rho_0^4h^{2+\beta}} \alpha_s \lambda^4 \fint_{\rho_0}^{\rho_1}B^2(r)\dr,
\end{align*}
where in the last inequality we used \eqref{reverse} and the Cauchy--Schwarz inequality. From this we obtain
$$
\frac{\de^2}{16}\left(\frac{8\rho_1^4}{\para \rho_0^2\la^2}+\frac{\alpha_s(\rho_0+\rho_1)^4\la^4}{8\rho_0^4h^{2+\beta}}\right)^{-1}
\leq  (W_{\rho_1}(\eta_1,\xi)-W_\rel(\eta_1))+\fint_{\rho_0}^{\rho_1} B^2(r)\dr,
$$
which, combined with \eqref{lem1} and \eqref{lem2}, yields
\begin{multline*}
	\frac{\delta}2\min\Bigg\{\frac{\para \la^2}{2\rho_0^2},\frac{\para \la^2\rho_0^2}{2\delta \rho_1^2},
	\frac{\de}8\left(\frac{8\rho_1^4}{\rho_0^2}\frac1{\para\la^2}+\frac{\alpha_s(\rho_0+\rho_1)^4}{8\rho_0^4}\frac{\la^4}{h^{2+\beta}}\right)^{-1}
	\Bigg\}\\
	\leq \sum_{i=0}^1(W_{\rho_i}(\eta_i,\xi)-W_\rel(\eta_i))+\fint_{\rho_0}^{\rho_1} B^2(r)\dr.
\end{multline*}
This, in view of Step~\ref{step2lemma3.1}, completes the proof of \eqref{ineq2}.
\end{enumerate}
The lemma is thus proved.
\end{proof}

\begin{remark}
	Provided $\de>0$ and $h$ is sufficiently small, the minimum in \eqref{ineq1} is obtained by balancing the terms depending on $h$ and $\la$, namely by choosing $\la^4=C_1h^\frac{2+\beta}2$. This yields
	\begin{equation}\label{ineqlb1}
		\sum_{i=0}^1 (W_{\rho_i}(\eta_i,\xi) - W_\rel(\eta_i)) + \para \fint_{\rho_0}^{\rho_1} B(r) \dr \geq C_2h^\frac{6-\beta}4.
	\end{equation}
	
	Similarly, the minimum in \eqref{ineq2} is obtained by balancing
	$$
	\frac{8\rho_1^4}{\rho_0^2}\frac1{\para\la^2}\quad\text{and}\quad \frac{\alpha_s(\rho_0+\rho_1)^4}{8\rho_0^4}\frac{\la^4}{h^{2+\beta}},
	$$
	which leads to $\la=C_1h^\frac{2+3\beta}{12}$ and
	\begin{equation}\label{ineqlb2}
		\sum_{i=0}^1 (W_r(\eta_i,\xi) - W_\rel(\eta_i)) + \fint_{\rho_0}^{\rho_1}B^2(r)\dr \geq C_2 h^\frac43.
	\end{equation}
	
	Combining \eqref{ineqlb1} and \eqref{ineqlb2}, we deduce
	\begin{multline*}
		\sum_{i=0}^1 (W_r(\eta_i,\xi) - W_\rel(\eta_i)) + \para \fint_{\rho_0}^{\rho_1} B(r) \dr + \fint_{\rho_0}^{\rho_1}B^2(r) \dr \\
		\geq
		C\max\{\expos,\expo\}
		=C
		\left\{ 
		\begin{array}{cll}
			\expo & \mathrm{if}\ \frac23 \leq \beta<2 & (\mathrm{with}\ \la\sim h^\frac{2+\beta}8)\\
			\expos & \mathrm{if}\ 0 <\beta\leq \frac23 & (\mathrm{with}\ \la\sim h^\frac{2+3\beta}{12}).
		\end{array}
		\right.
	\end{multline*}
	Here $C,C_1,C_2$ depend only on $\rho_0,\rho_1,\alpha_s$, and $\de$.
\end{remark}

\subsection{Proof of the lower bound part of the main results}\label{secprooflb}
\begin{proof}[Proof of the lower bound part of Theorem \ref{thm1}] Let 
\begin{equation*}
I_1\colonequals \left[\frac 12 r_0, \frac7{12}r_0\right] \quad\textrm{and}\quad
I_2\colonequals \left[\frac 23 r_0, r_0\right],
\end{equation*}
where, since $r_h \le \frac13 r_0$ (recall Remark~\ref{remarkrh3}), both are included in the interval $[r_h,r_0]$.  

\begin{enumerate}[label=\textsc{\bf Step \arabic*.},ref=\textsc{\arabic*},leftmargin=0pt,labelsep=*,itemindent=*,itemsep=10pt,topsep=10pt]

\item\label{step1mainthm} \textbf{Verification of the wrinkling hypothesis for Lemma \ref{lem:Ws}.}  
To be able to use Lemma~\ref{lem:Ws}, we need to show that for $r$ at least in $I_2$ the value $\bar u_r(r)/r$ lies well below $-2\para$ (i.e., the sheet should wrinkle there, and not only marginally).

More precisely, we claim that there exist $h_0,\de_0>0$ (where only $h_0$ depends on $\beta$ in the case $\beta\in\left(0,\frac23\right]$) such that, for any $0<h<h_0$ and for almost every $r\in I_2$,
\begin{equation}\label{de0}
\frac{\bar u_r(r)}{r}\leq -2\para-\de_0.
\end{equation}
Before giving the argument, let us explain the reasoning. By direct computation, we know that~\eqref{de0} is true if we replace $\bar u_r$ by $\u_h^0$. By~\eqref{estimateuh}, we know that $\u_h^0$ and $\u_h$ do not differ much, i.e., this also holds for $\u_h$, and it remains to compare $\u_h$ and $\bar u_r$. For that we use~\eqref{inf1}, specifically the control on $\int_0^{r_0} (\bar u_r' - \u_h')_+ r \dr$, which for convenience we replace by $\int_{\frac13 r_0}^{r_0} (\bar u_r' - \u_h')_+ \dr$. In particular, we have for any interval $[a,b] \subset \left[\frac13r_0,r_0\right]$ that 
\begin{equation*}
 (\bar u_r - \u_h)(b) - (\bar u_r - \u_h)(a) = \int_a^b (\bar u_r' - \u_h')(r) \dr \le \int_a^b (\bar u_r' - \u_h')_+(r) \dr.
\end{equation*}
Rearranging this and using the smallness of the r.h.s., we obtain
\begin{equation*}
 \bar u_r(a) - \bar u_r(b)\ge \u_h(a) - \u_h(b) - \textrm{small term}. 
\end{equation*}
Since $\u_h(a) - \u_h(b)$ is bounded above away from $0$, we see that either $\bar u_r(b)$ is below $-2\para b- \de_0b$, or otherwise $\bar u_r(a)$ is much larger than $-2 \para a$. Since the latter cannot be true for many points $a \in I_1$ (otherwise the last term on the first line in~\eqref{inf1} becomes too large), we obtain the conclusion.

\medskip
The rigorous argument proceeds as follows. First, we provide two estimates concerning $\u_h^0$, obtained from explicit computations. We recall that, for any $r\in [r_h,r_0]$, we have
$$
\u_h^0(r)=-2\para r+g(r),
$$
where $g(r)\colonequals -\dfrac16\left(\dfrac{r^3-r_h^3}{R^2}\right) +2\para r_0 \log\dfrac r{r_h}$. Note that, for any $s\in (r_h,r_0)$,
$$
g'(s)=\frac{-s^3+4\para r_0R^2}{2sR^2}.
$$
In particular, recalling that $r_h=(16\para r_0R^2)^\frac13$, we obtain
$$
g'(s)<\frac{-r_h^3+4\para r_0R^2}{2sR^2}<-6\para<0.
$$
Since $g(r_h)=0$, we conclude that  
\begin{equation*}
g(r)< 0 \quad \mbox{for any }r>r_h.
\end{equation*}
Moreover, recalling the assumption $\alpha_s\leq 2^{-8}3^{-6}r_0^4R^{-4}$ in the case $\beta=2$ (see Remark~\ref{remarkrh3}), we see that, for any $h$ sufficiently small (independently of $\beta$), we have
$$
\para r_0R^2\leq 2^{-4}3^{-3}r_0^3.
$$
Hence, for any $s\in \left[\frac13 r_0,r_0\right)$,
$$
g'(s)<\frac{-3^{-3}r_0^3+4\para r_0R^2}{2sR^2}\leq -\frac{3^{-3}r_0^3(1-2^{-2})}{2sR^2}\leq -\frac{r_0^2}{2^33^2R^2}\equalscolon -c_0.
$$
Therefore, for any $r_1,r_2\in \left[\frac13 r_0,r_0\right]$ with $r_1<r_2$, we have
$$
g(r_2)-g(r_1)=\int_{r_1}^{r_2}g'(s)\mathrm{d}s\leq -c_0(r_2-r_1). 
$$
In particular, taking $r_1=\frac{r_0}3\geq r_h$ and recalling that $g(r_1)\leq g(r_h)=0$, we are led to
$$
g(r_2)\leq-c_0\left(r_2-\frac{r_0}3\right).
$$
We immediately deduce that, for a.e. $r_1,r_2\in \left[\frac13 r_0,r_0\right]$ with $r_1<r_2$,
\begin{equation}\label{u_01}
\u_h^0(r_2)-\u_h^0(r_1)\leq -2\para (r_2-r_1)- c_0(r_2-r_1)
\end{equation}
and
\begin{equation*}
\u_h^0(r_2)\leq -2\para r_2-c_0\left(r_2-\frac{r_0}3\right).
\end{equation*}
From this last inequality, we deduce that, for a.e. $r\in \left[\frac12 r_0,r_0\right]$, we have
$$
\frac{\u_h^0(r)}{r}\leq -2\para -\delta_1,\quad \mathrm{where}\ \delta_1= c_0\left(1 -\frac23\right)=\frac{c_0}3. 
$$
By \eqref{estimateuh}, we find
\begin{equation}\label{uh1}
\frac{\u_h(r)}{r}\leq -2\para -\delta_1+O\left(h^{\frac12}|\log h|^\frac12\right)\leq  -2\para -\frac12\delta_1,
\end{equation}
provided $h$ is sufficiently small (independently of $\beta$).

\medskip
On the other hand, for a.e. $r_1,r_2\in \left[\frac12 r_0,r_0\right]$ with $r_1<r_2$, we have  
\begin{equation*}
 (\bar u_r - \u_h)(r_2) - (\bar u_r - \u_h)(r_1) = \int_{r_1}^{r_2} (\bar u_r' - \u_h')(r) \dr \le \int_{r_1}^{r_2} (\bar u_r' - \u_h')_+(r) \dr.
\end{equation*}
In particular, for a.e. $r_1\in I_1$ and a.e. $r_2\in I_2$, we have
$$
\bar u_r(r_2)\leq (\u_h(r_2)-\u_h(r_1))+\bar u_r(r_1)+ \int_{r_1}^{r_2} (\bar u_r' - \u_h')_+(r) \dr.
$$
From \eqref{estimateuh} and \eqref{u_01}, we are led to
$$
\u_h(r_2)-\u_h(r_1)\leq \u_h^0(r_2)-\u_h^0(r_1)+O\left(h^{\frac12}|\log h|^\frac12\right)\leq -2\para (r_2-r_1)-\frac{c_0}2(r_2-r_1),
$$
provided $h$ is sufficiently small (independently of $\beta$). Therefore,
\begin{equation}\label{est1}
\bar u_r(r_2)\leq -2\para (r_2-r_1)-\frac{c_0}2(r_2-r_1)+\bar u_r(r_1)+ \int_{r_1}^{r_2} (\bar u_r' - \u_h')_+(r) \dr.
\end{equation}

\medskip
From \eqref{inf1} and the upper-bound construction \eqref{toproveupperbound}, we deduce that
$$
\int_{I_1} \left( \left(\frac{\u_h(r)}r \vee -2\para\right) - \left(\frac{\bar u_r(r)}r \vee -2\para\right)\right)^2 r \dr \leq Ch^\frac14,
$$
provided $h$ is sufficiently small (independently of $\beta$). Since $I_1\subset \left[\frac12r_0,r_0 \right]$, from \eqref{uh1} we deduce that for a.e. $r\in I_1$,  
$$
\frac{\u_h(r)}r \vee -2\para=-2\para.
$$
Letting $\delta_2 \colonequals \dfrac{c_0}2\left(\dfrac23-\dfrac7{12}\right)=\dfrac{c_0}{24}$ and  
$$
I_1^\mathrm{bad}\colonequals \left\{r\in I_1 \ | \ \frac{\bar u_r(r)}{r}\geq -2\para +\frac1{2}\delta_2\right\},
$$
we deduce that
\begin{multline*}
|I_1^\mathrm{bad}|\left(\frac12\delta_2\right)^2\frac12{r_0} \leq \int_{I_1^{\mathrm{bad}}}\left(\frac{\bar u_r(r)}r +2\para\right)^2r\dr
\\ \leq \int_{I_1} \left( \left(\frac{\u_h}r \vee -2\para\right) - \left(\frac{\bar u_r}r \vee -2\para\right)\right)^2 r \dr \leq Ch^\frac14.
\end{multline*}
In particular, provided $h$ is sufficiently small (independently of $\beta$), we have
$$
|I_1\setminus I_1^{\mathrm{bad}}|\geq \frac{|I_1|}2=\frac1{24}r_0.
$$
Then, by choosing $r_1\in I_1\setminus I_1^{\mathrm {bad}}$, we obtain  
$$
\bar u_r(r_1)\leq -2\para r_1+\frac12 \delta_2r_1\leq -2\para r_1+\frac12\delta_2r_0.
$$
Inserting this and $-\dfrac{c_0}2(r_2-r_1)\leq -\delta_2r_0$ into  
\eqref{est1}, we find, for a.e. $r\in I_2$,
\begin{align*}
\bar u_r(r_2)&\leq -2\para (r_2-r_1)-\delta_2 r_0-2\para r_1+\frac12\delta_2r_0+ \int_{r_1}^{r_2} (\bar u_r' - \u_h')_+(r) \dr\\
&=-2\para r_2-\frac12\delta_2 r_0+\int_{r_1}^{r_2} (\bar u_r' - \u_h')_+(r) \dr,
\end{align*}
hence
\begin{multline}\label{est2}
\frac{\bar u_r(r_2)}{r_2}\leq -2\para -\frac12\delta_2+\frac1{r_2}\int_{r_1}^{r_2} (\bar u_r' - \u_h')_+(r) \dr\\ \leq -2\para -\frac12\delta_2+\frac3{2r_0}\int_{\frac12r_0}^{r_0} (\bar u_r' - \u_h')_+(r) \dr.
\end{multline}
To finish the argument, we combine \eqref{inf1} with the upper-bound construction \eqref{toproveupperbound}, which yields
\begin{equation*}
\frac3{2r_0}\int_{\frac12r_0}^{r_0} (\bar u_r' - \u_h')_+(r)\dr\leq C\left\{\begin{array}{cl}
|\log h|^\frac{c_2}2h^\frac{2+\beta}8 & \mathrm{if}\ \frac23\leq \beta \leq 2\\
|\log h|^\frac{c_4}2h^\frac{\beta}2 & \mathrm{if}\ 0<\beta \leq \frac23.
\end{array}\right.
\end{equation*}
The right-hand side of this expression can be made smaller or equal to $\frac{\delta_2}4$, provided $h$ is sufficiently small (independently of $\beta$ only in the case $\frac23<\beta \leq 2$). Inserting this into \eqref{est2}, we obtain \eqref{de0} with $\delta_0=\frac14\delta_2$.

\item\label{step2mainthm} \textbf{Conclusion.}
Let $I_h=(a,b)$ be an interval of length $2h^\frac{2+\beta}8$ contained in $I_2$. Define $I_h^0=\left(a,a+\frac12 h^\frac{2+\beta}8\right)$ and $I_h^1=\left(b-\frac12h^\frac{2+\beta}8,b\right)$.  

Let $\rho_i$ be such that
\begin{equation*}
W_{\rho_i}\left(\frac{\bar u_r(\rho_i)}{\rho_i},w(\rho_i,\cdot) \right) - W_\rel\left(\frac{\bar u_r(\rho_i)}{\rho_i}\right)=
\min_{r\in I_h^i} W_r\left(\frac{\bar u_r(r)}r,w(r,\cdot)\right) - W_\rel\left(\frac{\bar u_r(r)}r\right)
\end{equation*}
for $i=0,1$. Observe that $\la=\rho_1-\rho_0\in \left[\frac{|I_h|}2,|I_h|\right)=[h^\frac{2+\beta}8,2h^\frac{2+\beta}8)$. The choice of $\rho_0$ and $\rho_1$ then implies
\begin{multline*}
\int_{I_h} \left[W_r\left(\frac{\bar u_r(r)}r,w(r,\cdot)\right) - W_\rel\left(\frac{\bar u_r(r)}r\right) +\para B(r)\right]\dr\\
\geq \frac{|I_h|}4 \sum_{i=0}^1 \left(W_{\rho_i}\left(\frac{\bar u_r(\rho_i)}{\rho_i},w(\rho_i,\cdot)\right) - W_\rel\left(\frac{\bar u_r(\rho_i)}{\rho_i}\right)\right)+\la \para \fint_{\rho_0}^{\rho_1} B(r) \dr.
\end{multline*}

Using \eqref{ineqlb1}, we deduce that
\begin{equation*}
\int_{I_h} \left[W_r\left(\frac{\bar u_r(r)}r,w(r,\cdot)\right) - W_\rel\left(\frac{\bar u_r(r)}r\right) +\para B(r)\right]\dr\geq C_0|I_h|h^\frac{6-\beta}4,
\end{equation*}
where $C_0$ denotes a constant depending only on $\alpha_s$, $r_0$, and $\de_0$.

By covering the interval $I_2$ with intervals like $I_h$ considered above, and using \eqref{sigmah} (recall that $I_2\subset \left[\frac12r_0,r_0\right]$), we deduce that  
\begin{equation*}
\int_{I_2} \left[W_r\left(\frac{\bar u_r(r)}r,w(r,\cdot)\right) - W_\rel\left(\frac{\bar u_r(r)}r\right) +\sigma_h B(r)\right]\dr\geq C_0h^\frac{6-\beta}4.
\end{equation*}
Finally, combining the previous estimate with \eqref{inf1}, we obtain the lower bound in \eqref{mainresulteq}. The proof is thus complete.
\end{enumerate}
\end{proof}

\begin{proof}[Proof of the lower-bound part of Theorem \ref{thm2}] 
We just need to modify Step~\ref{step2mainthm} in the previous proof for $\beta\in (0,\frac23)$. Let $I_h=(a,b)$ be an interval of length $2h^\frac{2+3\beta}{12}$ contained in $I_2$. Define $I_h^0=\left(a,a+\frac12 h^\frac{2+3\beta}{12}\right)$ and $I_h^1=\left(b-\frac12h^\frac{2+3\beta}{12},b\right)$.  

Let $\rho_i$ be such that
\begin{equation*}
W_{\rho_i}\left(\frac{\bar u_r(\rho_i)}{\rho_i},w(\rho_i,\cdot)\right) - W_\rel\left(\frac{\bar u_r(\rho_i)}{\rho_i}\right)=
\min_{r\in I_h^i} W_r\left(\frac{\bar u_r(r)}r,w(r,\cdot)\right) - W_\rel\left(\frac{\bar u_r(r)}r\right)
\end{equation*}
for $i=0,1$. Observe that $\la=\rho_1-\rho_0\in \left[\frac{|I_h|}2,|I_h|\right)=[h^\frac{2+3\beta}{12},2h^\frac{2+3\beta}{12})$. Hence  
\begin{multline*}
\int_{I_h} \left[W_r\left(\frac{\bar u_r(r)}r,w(r,\cdot)\right) - W_\rel\left(\frac{\bar u_r(r)}r\right) +B^2(r)\right]\dr\\
\geq \frac{|I_h|}4 \sum_{i=0}^1 \left(W_{\rho_i}\left(\frac{\bar u_r(\rho_i)}{\rho_i},w(\rho_i,\cdot)\right) - W_\rel\left(\frac{\bar u_r(\rho_i)}{\rho_i}\right)\right)+\la\fint_{\rho_0}^{\rho_1} B^2(r) \dr.
\end{multline*}

Using \eqref{ineqlb2}, we deduce that
\begin{equation*}
\int_{I_h} \left[W_r\left(\frac{\bar u_r(r)}r,w(r,\cdot) \right) - W_\rel\left(\frac{\bar u_r(r)}r\right) +B^2(r)\right]\dr\geq C_0|I_h|h^\frac43,
\end{equation*}
where $C_0$ denotes a constant depending only on $\alpha_s$, $r_0$, and $\de_0$.

By covering the interval $I_2$ with intervals like $I_h$ considered above, we deduce that  
\begin{equation*}
\int_{I_2} \left[W_r\left(\frac{\bar u_r(r)}r,w(r,\cdot)\right) - W_\rel\left(\frac{\bar u_r(r)}r\right) +B^2(r)\right]\dr\geq C_0h^\frac43.
\end{equation*}
Combining this with \eqref{inf2}, we obtain the lower bound in \eqref{mainresulteq2}. The proof is thus complete.
\end{proof}

\section{Upper bound}

In this section we construct an explicit wrinkling ansatz that achieves the upper bound.  
The deformation $(u,w)$ wastes the correct arclength, cancels the leading oscillatory errors,  
and balances the remaining terms by a suitable choice of parameters.  
The resulting energy matches the lower bound scaling in the case $\frac23\leq \beta\leq 2$, up to a logarithmic error, with improved accuracy for $\beta=2$ compared to \cite{BeKo17}.

\subsection{Strategy for the upper bound}
To establish the upper bound, our goal is to construct for each $h$ a deformation $(u,w)$ such that, for some positive constants $c_i$, $i=1,\dots,4$, we have
\begin{equation}\label{toproveupperbound}
	E_h(u,w)-F_h^0(\u_h^0)\leq
	\left\{ 
	\begin{array}{cl}
		c_1|\log h|^{c_2}\expo & \mathrm{if}\ \frac23\leq \beta \leq 2\\
		c_3|\log h|^{c_4}\factors & \mathrm{if}\ 0< \beta \leq\frac23.
	\end{array}\right.
\end{equation}

The construction follows the same guiding principles as in~\cite{BeKo17}: we modify the relaxed minimizer $\u_h^0$ by introducing oscillations (wrinkles) that waste the appropriate amount of arclength. Unlike the case $\beta=2$ treated there, here we must adapt the choice of parameters to handle the full range $0\leq \beta \leq 2$, which requires a careful balance between the bending term $B$, the discrepancy $W_r-W_\rel$, and the remainder contributions.

The basic idea is to define $w(r,\theta)$ so that the excess length $\gamma(r)$ produced by wrinkling matches the target $\gamma_0(r)$ dictated by $\u_h^0$. The amplitudes and frequencies of the wrinkles are chosen so that $\gamma(r)-\gamma_0(r)$ remains small, while the additional cost in the energy is controlled by suitable cut-off functions. A discrete-to-continuous comparison shows that this discrepancy is exponentially small. To deal with the error terms that arise in the expansion of the remainder, we introduce two correctors: a radial component $u_r(r,\theta)$ and an angular component $u_\theta(r,\theta)$. These are chosen explicitly so that their derivatives cancel the leading oscillatory contributions, thereby reducing $R_h$ to controlled terms.

Three parameters play a central role:
\begin{itemize}
	\item $\ell$: the characteristic radial length scale over which the wrinkling pattern varies;
	\item $\alpha$: the exponent controlling the transition region near $r_h$, which determines how quickly the amplitude $A(r)$ grows from zero to its full value;
	\item $\delta$: the exponent governing the wrinkling frequency $N=h^\delta/\ell$, which ensures that the discrete-to-continuous discrepancy in $\gamma(r)-\gamma_0(r)$ is exponentially small.
\end{itemize}  

The parameters $\ell,\alpha,\delta$ are tuned according to the value of $\beta$, so that the three main contributions (bending, mismatch, and remainder) are balanced. The precise formulas are given later, but the guiding principle is that for $\tfrac23\leq\beta\leq 2$ the construction parallels~\cite{BeKo17} with improved error control, thanks to the optimized discrete-to-continuous comparison, while for $0<\beta<\frac23$ the dominant contribution comes from one of the terms in the remainder $R_h$, leading to the rate $\factors$ in this regime.

As in~\cite{BeKo17}, it is sufficient to carry out the construction for a discrete sequence of values of $h$ tending to $0$, chosen so that the frequency conditions are satisfied. This restriction does not affect the asymptotic upper bound.

\subsection{Sketch of the proof}
The implementation of the strategy proceeds in the following steps.

\begin{enumerate}[label=\textsc{\bf Step \arabic*.},ref=\textsc{\arabic*},leftmargin=0pt,labelsep=*,itemindent=*,itemsep=10pt,topsep=10pt]
	
	\item \textbf{The ansatz.}  
    We begin by introducing the deformation $(u,w)$ of the form
    $$
    (u,w)=\Big(\bigl(\u_h^0(r)+u_r(r,\theta),\,u_\theta(r,\theta)\bigr),\,w(r,\theta)\Big),
    $$
    where $\u_h^0(r)$ is the relaxed radial profile, while $u_r(r,\theta)$ and $u_\theta(r,\theta)$ are correctors in the radial and angular directions, respectively, whose precise choice will be specified later. The out-of-plane component $w(r,\theta)$ carries oscillations with amplitude $A(r)$ and is modulated by a smooth mask $m(\theta)$, which localizes the wrinkling frequencies near the optimal one. This ansatz is designed so that the excess length $\gamma(r)$ matches the target $\gamma_0(r)$ dictated by $\u_h^0$. A discrete-to-continuous comparison ensures that the discrepancy $\gamma-\gamma_0$ is exponentially small.
	
	\item \textbf{Definition of $w$ and control of $W_r-W_\rel$.}  
	We verify that the mismatch term $W_r-W_\rel$ remains small by choosing frequencies close to the optimal value. This step establishes the first control on the oscillatory contribution of $w$.
	
	\item \textbf{Estimates for $B^2$ and $\sigma_h^0B$.}  
	We compute the bending term $B$ and the mixed term $\sigma_h^0B$, showing that they are bounded in terms of $A(r)$ and its derivatives. These estimates ensure that the bending cost is consistent with the targeted scaling.
	
	\item \textbf{Expansion of the remainder term $R_h$ and estimates for the $w$-dependent contributions.}  
	The remainder term $R_h$ decomposes into five parts, $R_h^1,\dots,R_h^5$, each arising from distinct oscillatory interactions. Among these, $R_h^4$ and $R_h^5$ depend explicitly on $w$ and are estimated at this stage, while the dominant components in $R_h^1$, $R_h^2$, and $R_h^3$ are left for cancellation in the subsequent steps.
	
	\item \textbf{Definition of $u_\theta$ and reduction of $R_h^2$.}  
	We introduce the angular corrector $u_\theta$, chosen so that its derivatives cancel the oscillatory part of $R_h^2$, thereby reducing the remainder to lower order.
	
	\item \textbf{Definition of $u_r$, cancellation of $R_h^3$, and reduction of $R_h^1$.}  
	We construct the radial corrector $u_r=u_{r,1}+u_{r,2}$. The first part cancels the leading contribution in $R_h^3$, while the second part addresses a mixed term in $R_h^1$ that couples radial and angular derivatives. Together, these eliminate the dominant radial oscillations.
	
	\item \textbf{Estimates for $\partial_r u_{r,2}$ and final bound for $R_h$.}  
	The derivative $\partial_r u_{r,2}$ generates the remaining potentially large term in the expansion of $R_h$. We estimate it carefully and, together with the previous cancellations, obtain the final bound for $R_h$. At this stage, all five contributions $R_h^1,\dots,R_h^5$ are controlled.
	
	\item \textbf{Choice of the parameters $\alpha,\ell$, and $\de$.}  
	In the final step, the parameters are tuned according to the regime of $\beta$, so that bending, mismatch, and remainder are balanced. With this choice, the ansatz $(u,w)$ achieves the desired upper bound \eqref{toproveupperbound} in both regimes of $\beta$.
\end{enumerate}

\subsection{Proof of the upper bound part of the main results}

\begin{proof}[Proof of the upper bound in Theorem \ref{thm1}] The proof is divided in several steps.
	
	\begin{enumerate}[label=\textsc{\bf Step \arabic*.},ref=\textsc{\arabic*},leftmargin=0pt,labelsep=*,itemindent=*,itemsep=10pt,topsep=10pt]
		
	\item \textbf{The ansatz.}
	We consider a configuration $(u,w)$ of the form
	$$
	(u,w)=\Big(\bigl(\u_h^0(r)+u_r(r,\theta),\,u_\theta(r,\theta)\bigr),\,w(r,\theta)\Big),
	$$
	subject to the normalization
	$$
	\bar u_r(r)=\bar u_\theta(r)=\bar w(r)=0 \quad \forall r\in (0,r_0).
	$$
	Here $u_r$, $u_\theta$, and $w$ represent the oscillatory corrections: 
	the wrinkling profile $w$ is defined in~\eqref{eq:defw}, 
	the angular corrector $u_\theta$ in~\eqref{eq:defutheta}, 
	and the radial corrector $u_r$ in~\eqref{eq:defur}. 
	
	For such a configuration, the energy decomposes as
	\begin{multline}\label{difEnergies}
		E_h(u,w)-F_h^0(\u_h^0)=\int_0^{r_0} \bigg[2\sigma_h^0B+B^2+W_r\left(\frac{\u_h^0}r,w\right) -W_\rel\left(\frac{\u_h^0}r\right) \bigg]r\dr+R_h(u,\xi)+h^2\frac{r_0^2}{2R^2},
	\end{multline}
	where
	$$
	B(r)=\fint_0^{2\pi} |\partial_r w(r,\theta)|^2 \,\mathrm{d}\theta, 
	\qquad \sigma_h^0(r)=\u_h^0\,'(r)+\frac{r^2}{2R}, 
	\qquad \xi(r,\theta)=w(r,\theta)-\frac{r^2}{2R^2}.
	$$
	
	\item 
	\textbf{Definition of $w$ and control of $W_r-W_\rel$.}
	Our aim in this step is to define the wrinkling profile $w$ so that the excess length 
	$\gamma(r)$ matches the target $\gamma_0(r)$ dictated by $\u_h^0$. 
	This will ensure that the contribution $W_r-W_\rel$ in the energy remains small.
	
	\begin{enumerate}[label=\textsc{\bf Step \arabic{enumi}.\arabic*.},ref=\textsc{\arabic*},leftmargin=0pt,labelsep=*,itemindent=*,itemsep=10pt,topsep=10pt]
		
	\item \textbf{Construction of the wrinkling profile.}
	We introduce two auxiliary ingredients. First, a smooth bump function $m:\R\to[0,1]$, supported in $[-1/2,1/2]$, defined by
	\begin{equation*}
		m(t)\colonequals 
		\left\{
		\begin{array}{cl}
			\exp\biggl(-\dfrac1{1-4|t|^2}\biggr) &\mathrm{if}\ |t|\leq \frac12,\\
			0&\mathrm{if}\ |t|> \frac12
		\end{array}\right.
	\end{equation*}
	which localizes the wrinkling frequencies near the optimal one. Second, an amplitude function $A(r)$ (precisely defined in \eqref{eq:defA} below), supported in $[r_h,r_0]$, chosen later so that 
	$\gamma(r)\approx\gamma_0(r)$.
	
	\medskip
	With these definitions in place, and given parameters $\ell,\delta>0$ with 
	$N\colonequals h^\delta/\ell$, we set
	\begin{equation}\label{eq:defw}
		w(r,\theta)\colonequals A(r)rh^\frac\de2 \sum_{k>0} \mask \frac{\sqrt 2 \cos(kN\theta)}{kN},
	\end{equation}
	
	\medskip
	By construction $A(r)=0$ for $r\leq r_h$, so in the tensile region
	$$
	W_r\!\left(\frac{\u_h^0(r)}r,w(r,\cdot)\right)-W_\rel\!\left(\frac{\u_h^0(r)}r\right)=0.
	$$
	For $r\in(r_h,r_0]$, where $\u_h^0(r)< -2\para r$, the expansion \eqref{difW} yields
	\begin{multline}\label{difWWrel}
		W_r\left(\frac{\u_h^0}r,w(r,\cdot)\right) -W_\rel\left(\frac{\u_h^0}r\right)=\left|\gamma (r)-\gamma_0(r)\right|^2\\+A^2(r)h^\de \sum_k \maskk\left(\frac{hNk}r-\frac{\alpha_s^\frac12 r}{h^\frac\beta2 Nk} \right)^2
	\end{multline}
	with 
	$$
	\gamma(r)\colonequals A^2(r)h^\de \sum_k \maskk,\qquad \gamma_0(r)\colonequals {-}\left(\frac{ \u_h^0}r +2\para  \right) > 0.
	$$
	
	\item \textbf{Control of the excess length $\gamma(r)-\gamma_0(r)$.}
	To control $\gamma(r)-\gamma_0(r)$ we introduce the cutoff $\eta$ with $\eta(t)=1$ for $t>2$ and $\eta(t)=0$ for $t<1$, and set
	\begin{equation}\label{eq:defA}
		A(r)\colonequals 
		\left\{
		\begin{array}{cl}
			0&r\in (0,r_h)\\
			\eta\left(\dfrac{r-r_h}{h^\alpha}\right) \left(\dfrac{\gamma_0(r)}{\int_\R m^2 \dx}\right)^\frac12 &r\in (r_h,r_0),
		\end{array}\right.
	\end{equation}
	with $\alpha>0$ chosen later in such a way that the relation $h^\alpha \ll r_h$ holds. This guarantees that $A(r)$ grows smoothly from $0$ near $r_h$ to the desired amplitude.
	
	\medskip
	A direct computation shows
	\begin{equation}\label{derA}
		|A'(r)|\leq C\left(h^{-\alpha} r_h\right)^\frac12,\qquad |A''(r)|\leq C\left(h^{-3\alpha}r_h\right)^\frac12.
	\end{equation}
	
	\medskip
	To compare the discrete sum $\gamma(r)$ with the target $\gamma_0(r)$, 
	we define
	$$
	\tilde \gamma(r)\colonequals A^2(r)h^\de \int_\R \maskk \mathrm{d}k=A^2(r)\int_\R m^2 \dx.
	$$
	Then $\tilde\gamma(r)=\gamma_0(r)$ for $r\in(r_h+2h^\alpha,r_0)$, while in the transition region
	$$
	|\tilde\gamma(r)-\gamma_0(r)|\leq C h^\alpha r_h,\quad r\in[r_h,r_h+2h^\alpha],
	$$
	which implies
	\begin{equation}\label{gammas1}
		\int_{r_h}^{r_0}|\tilde\gamma(r)-\gamma_0(r)|^2r\dr\leq C(h^\alpha r_h)^3.
	\end{equation}
	
	\medskip
	Finally, to estimate $\gamma(r)-\tilde\gamma(r)$ we use the discrete-to-continuous comparison 
	from~\cite{BeKo17}, Eq.~(5.4). For smooth compactly supported $f:\R\to\R$, there exists $C_f>0$ 
	(support-dependent) such that, for any $0\neq n\in\N$, $t\in(0,1)$, and $\zeta\in\R$,
	\begin{equation}\label{difsumint}
		\biggl|t\sum_{k\in\Z}f(tk+\zeta)-\int_\R f\biggr|\leq C_ft^n\|f^{(n)}\|_\infty.
	\end{equation}
	Moreover, for any $n\in\N$,
	$$
	\|(m^2)^{(n)}\|_\infty\leq(C_mn)^{(C_m'n)},
	$$
	with constants $C_m,C_m'$ depending only on $m$, which may change from line to line. Combining with \eqref{difsumint} for $f=m^2$, we obtain
	$$
	|\gamma(r)-\tilde\gamma(r)|\leq(C_mn)^{(C_m'n)}(h^\de)^n,\quad\forall n\in\N.
	$$
	The crucial point is the choice of $n$: by optimizing we take
	\begin{equation*}
		n=(C_me^\frac1{C_m'})^{-1}h^{-\frac \de{C_m'}},
	\end{equation*}
	which yields
	$$
	\int_{r_h}^{r_0}|\gamma(r)-\tilde \gamma(r)|^2r\dr\leq C \exp\left(-\frac{h^{-\frac{\de}{C_m'}}}{C_m}\right)^2.
	$$
	This optimized choice of $n$ is a main novelty of our construction: it improves the error term in the upper bound in~\cite{BeKo17}, and thus yields a sharper result when $\beta=2$. Combined with \eqref{gammas1}, we obtain
	\begin{equation}\label{gammas2}
		\int_{r_h}^{r_0} |\gamma(r)-\gamma_0(r)|^2r\dr\leq C\left( (h^\alpha r_h)^3 +\exp\left(-\frac{h^{-\frac{\de}{C_m'}}}{C_m}\right)^2\right).
	\end{equation}
	
	\item \textbf{Frequency mismatch estimate and final bound.}
	Having controlled the discrepancy $\gamma(r)-\gamma_0(r)$ up to small errors, we now turn to the remaining contribution in \eqref{difWWrel}. 
	This second term reflects the frequency mismatch in the wrinkling profile, 
	and its estimation completes the control of $W_r-W_\rel$ within this step.
	
	\medskip
	Define
	$$
	g(k)\colonequals \frac{hNk}r-\frac{\alpha^{\frac12} r}{h^{\frac\beta2} Nk}.
	$$
	Observe that
	$g(k_0)=0$, where $k_0\colonequals \frac{\alpha_s^\frac14 r}{\factor N}$ is the frequency $k$ for which 
	\begin{equation*}
		\mask =m(0)=1.
	\end{equation*} 
	Moreover, $g'(k_0)=\frac{2hN}r$. 
	
	The sum $\sum\limits_k \mask^2$ then runs over $O(h^{-\delta})$ integers $k$ with $|k-k_0|\leq \frac12 h^{-\delta}$, a fact that will be invoked repeatedly in the sequel.
	A Taylor expansion and the definition of $N$ thus give
	$$
	h^\delta \sum_k \mask^2 g(k)^2
	\leq C h^\delta h^{-\delta}\bigl(g'(k_0) h^{-\delta}\bigr)^2
	\leq \frac{C}{r^2}\frac{h^2}{\ell^2}.
	$$
	Using \eqref{v0}, we deduce that $\frac{A^2(r)}{r^2}$ is bounded, and therefore
	$$
	\int_0^{r_0} A^2(r)h^\de \sum_k \maskk\left(\frac{hNk}r-\frac{\alpha_s^\frac12 r}{h^\frac\beta2 Nk} \right)^2r\dr\leq C\frac{h^2}{\ell^2}.
	$$
	Together with \eqref{gammas2}, in view of \eqref{difWWrel}, this yields
	\begin{equation}\label{upper1}
		\int_0^{r_0} \left[W_r\left(\frac{\u_h^0}r,w\right) -W_\rel\left(\frac{\u_h^0}r\right) \right]r\dr\leq C\left((h^\alpha r_h)^3+\exp\left(-h^{-\frac{\de}{C_m'}}\right)^2+\frac{h^2}{\ell^2}\right).
	\end{equation}
	\end{enumerate}
	
	\item \textbf{Estimates for $B^2$ and $\sigma_h^0B$.}  
	Having fixed the wrinkling profile $w$, we now estimate the contributions $\sigma_h^0 B$ and $B^2$ that enter the energy difference in \eqref{difEnergies}. 
	The bounds follow directly from the properties of $A(r)$ and its derivatives.
		
	We have
	\begin{equation*}
		B(r)=\fint_0^{2\pi}|\partial_r w(r,\theta)|^2\dt=h^\de \sum_k \left(\partial_r\left( A(r)r\mask\right) \right)^2\frac1{(kN)^2}.
	\end{equation*}
	Over the $O(h^{-\delta})$ frequencies with $|k-k_0|\leq \frac12 h^{-\delta}$,
	\begin{equation}\label{kN}
		kN\geq C\frac{r}{\factor}.
	\end{equation}
	From \eqref{derA} together with $\frac{A^2(r)}{r^2}\leq C$, we thus obtain
	$$
	B(r)\leq C \factors \left(|A'(r)|^2+1+\frac{\ell^2}{\factors}\right)
	\leq C(\factors h^{-\alpha}r_h+\ell^2).
	$$
	A direct computation shows that $0\leq \sigma_h^0 r\leq Ch^{\frac{2-\beta}{2}}$ for $r\geq r_h$ 
	(recall \eqref{sigma0}); thus
	\begin{equation}\label{sigmaB}
		\int_0^{r_0}\sigma_h^0 B(r)\, r\dr \leq C\bigl(h^{2-\alpha}r_h+h^{\frac{2-\beta}{2}}\ell^2\bigr)
	\end{equation}
	and
	\begin{equation}\label{B2}
		\int_0^{r_0} B^2(r)\, r\dr \leq C\bigl(h^{2+\beta-2\alpha}r_h^2+\ell^4\bigr).
	\end{equation}
	
	\item \textbf{Expansion of the remainder term $R_h$ and estimates for the $w$-dependent contributions.}
	We expand the remainder term $R_h$ into five contributions $R_h^1,\dots,R_h^5$, each isolating a distinct oscillatory or bending effect. 
	The purpose of this decomposition is to separate the parts that can be controlled directly from those requiring the introduction of the correctors $u_r,u_\theta$. 
	In particular, the terms $R_h^4$ and $R_h^5$ depend explicitly on the wrinkling profile $w$ and can be estimated immediately using the frequency bounds and the regularity of $A(r)$. 
	
	We proceed to introduce the five components $R_h^1,\dots,R_h^5$:	
	\begin{align}
	R_h^1&= \int_0^{r_0}\fint_0^{2\pi} \left|\partial_r u_r+\frac{(\partial_r\xi)^2}2-\frac{\overline{(\partial_r\xi)^2}}2 \right|^2\dt r\dr\label{def:Rh1}\\
	R_h^2&= \int_0^{r_0}\fint_0^{2\pi} \left|\frac{\partial_\theta u_\theta}r +\frac{u_r}r+\frac{(\partial_\theta w)^2}{2r^2}-\frac{\overline{(\partial_\theta w)^2}}{2r^2} \right|^2\dt r\dr\label{def:Rh2}\\
	R_h^3&= \int_0^{r_0}\fint_0^{2\pi} \frac12 \left|\frac{\partial_\theta u_r}r+r\partial_r\left(\frac{u_\theta}r\right)+\frac1r \partial_r\xi\partial_\theta\xi \right|^2\dt r\dr\label{def:Rh3}\\
	R_h^4&= \int_0^{r_0}\fint_0^{2\pi} h^2\left|\partial_{rr} w \right|^2\dt r\dr\label{def:Rh4}\\
	R_h^5&= \int_0^{r_0}\fint_0^{2\pi} \frac{2h^2}{r^2}\left|\partial_{\theta r}\xi \right|^2\dt r\dr\label{def:Rh5},
	\end{align}
	where $\overline{(\partial_r\xi)^2}=\fint_0^{2\pi}(\partial_r\xi)^2\mathrm{d}\theta$ and $\overline{(\partial_\theta w)^2}=\fint_0^{2\pi}(\partial_\theta w)^2\mathrm{d}\theta$.

	\begin{enumerate}[label=\textsc{\bf Step \arabic{enumi}.\arabic*.},ref=\textsc{\arabic*},leftmargin=0pt,labelsep=*,itemindent=*,itemsep=10pt,topsep=10pt]
	\item \textbf{Estimating $R_h^4$.} Since
	$$
	\partial_{rr} w= h^\frac\de2 \sum_k \partial_{rr}\left( A(r)r\mask\right) \frac{\sqrt 2\cos(kN\theta)}{kN},
	$$
	using \eqref{kN} and $\frac{|A'(r)|}{r}\leq C|A''(r)|$, we obtain
	$$
	|\partial_{rr}w|\leq Ch^{\delta/2} h^{-\delta}\,\factor \left(|A''(r)|+\frac{\ell^2}{\factor}\right),
	$$
	which combined with \eqref{derA} yields (recall \eqref{def:Rh4})
	\begin{equation}\label{Rh4}
		R_h^4\leq C h^{-\delta}\left(\factor\, h^{2-3\alpha}r_h+\ell^4 h^{\frac{2-\beta}{2}}\right).
	\end{equation}
	
	\item \textbf{Estimating $R_h^5$.} We have
	$$
	\partial_{\theta r} \xi = -h^\frac\de2 \sum_k \partial_r\left( A(r)r\mask\right) \sqrt 2 \sin(kN\theta),
	$$
	hence
	$$
	\frac{|\partial_{\theta r} \xi|}r \leq C h^\frac\de2 h^{-\de} \left(|A'(r)|+\frac\ell\factor \right).
	$$
	This combined with \eqref{derA} implies (recall \eqref{def:Rh5})
	\begin{equation}\label{Rh5}
		R_h^5\leq C h^{-\delta}\left(h^{2-\alpha}r_h+\ell^2 h^{\frac{2-\beta}{2}}\right).
	\end{equation}
	\end{enumerate}
	
	This yields the desired control of the $w$-dependent terms $R_h^4$ and $R_h^5$. 
	In the following steps we turn to the construction of $u_\theta,u_r$ and the control of $R_h^1,R_h^2,R_h^3$.
	
	\item \textbf{Definition of $u_\theta$ and reduction of $R_h^2$.}
	We now turn to $R_h^2$. 
	This term involves oscillations of $(\partial_\theta w)^2$ around its mean, which cannot be estimated directly. 
	To cancel these oscillations, we introduce an auxiliary angular displacement $u_\theta$, defined so that its derivative exactly compensates the fluctuating part of $(\partial_\theta w)^2$. 
	This construction reduces $R_h^2$ to a simpler form involving only the radial displacement $u_r$.
	
	To this end, we analyze $R_h^2$, defined in \eqref{def:Rh2}, in detail. We begin by observing that
	$$
	(\partial_\theta w)^2=2A^2(r)r^2h^\de \sum_{k,j}\mask \maskj \sin(kN\theta)\sin(jN\theta)
	$$
	and
	$$
	\overline{(\partial_\theta w)^2}= A^2(r)r^2h^\de\sum_k \maskk.
	$$
	Writing
	$$
	\sin(kN\theta)\sin(jN\theta)=-\frac12\cos((k+j)N\theta)+ \frac12\cos((k-j)N\theta),
	$$
	we obtain
	\begin{align*}
		(\partial_\theta w)^2-\overline{(\partial_\theta w)^2}
		=&- A^2(r)r^2h^\de \sum_{k,j}\mask \maskj \cos((k+j)N\theta) \\
		&+ A^2(r)r^2h^\de \sum_{k\neq j} \mask \maskj \cos((k-j)N\theta).
	\end{align*}
	Since these terms are of order $O(h^{-\de})$, we define $u_\theta$ so that they cancel. More precisely, we let
	\begin{equation}\label{eq:defutheta}
	u_\theta(r,\theta)= \frac12 (u_{\theta,+}-u_{\theta,-}),
	\end{equation}
	where
	\begin{align*}
		u_{\theta,+}(r,\theta)&\colonequals A^2(r)rh^\de \sum_{k,j}\mask \maskj \frac{\sin((k+j)N\theta)}{(k+j)N},\\
		u_{\theta,-}(r,\theta)&\colonequals A^2(r)rh^\de \sum_{k\neq j}\mask \maskj \frac{\sin((k-j)N\theta)}{(k-j)N}.
	\end{align*}
	Note that
	$$
	\frac{\partial_\theta u_\theta}r+\frac{(\partial_\theta w)^2}{2r^2}-\frac{\overline{(\partial_\theta w)^2}}{2r^2}=0,
	$$
	and therefore (recall \eqref{def:Rh2})
	\begin{equation}\label{R2}
		R_h^2=\int_0^{r_0}\fint_0^{2\pi}\left|\frac{u_r}r\right|^2\dt r\dr.
	\end{equation}
	
	This completes the construction of $u_\theta$ and the reduction of $R_h^2$ to a form depending solely on $u_r$. 
	In the following step, we introduce $u_r$, which allows us to control $R_h^1$ and eliminate $R_h^3$.
	
	\item \textbf{Definition of $u_r$, cancellation of $R_h^3$, and reduction of $R_h^1$.}
	We now construct the radial displacement $u_r$. 
	This function serves two essential purposes: it cancels the mixed oscillatory term in $R_h^3$, 
	and it reduces $R_h^1$ to a form that can be estimated effectively. 
	To achieve this, we split $u_r$ into two components, $u_{r,1}$ and $u_{r,2}$, each designed for a specific cancellation.
	
	\begin{enumerate}[label=\textsc{\bf Step \arabic{enumi}.\arabic*.},ref=\textsc{\arabic*},leftmargin=0pt,labelsep=*,itemindent=*,itemsep=10pt,topsep=10pt]
	\item \textbf{Analysis of $R_h^1$.} We now turn to $R_h^1$, defined in \eqref{def:Rh1}. Observe that
	$$
	(\partial_r \xi)^2=\left(\partial_r w-\frac rR \right)^2=(\partial_r w)^2-2\frac rR \partial_r w +\frac{r^2}{R^2},
	$$
	and since $\overline{\partial_r w}=0$, we obtain 
	\begin{equation}\label{xi2}
		\frac12\left((\partial_r \xi)^2-\overline{(\partial_r \xi)^2}\right)=\frac12 \left((\partial_r w)^2-B\right)-\frac rR \partial_r w.
	\end{equation}
	We begin by analyzing the first term on the right-hand side of \eqref{xi2}. Introducing
	\begin{equation}\label{defMk}
		M_k(r)\colonequals A(r)r\mask,
	\end{equation}
	we have
	$$
	(\partial_r w)^2=2h^\de \sum_{k,j} \partial_r M_k(r) \partial_r  M_j(r)\frac{\cos(kN\theta)\cos(jN\theta)}{kjN^2}.
	$$
	Arguing as when estimating $B$, we deduce
	$$
	|(\partial_r w)^2|\leq C h^\de h^{-2\de}\factors\left(|A'|^2+\frac{\ell^2}{\factors}\right)\leq Ch^{-\de}(\factors h^{-\alpha}r_h+\ell^2).
	$$
	Consequently, recalling \eqref{B2}, we obtain
	\begin{equation}\label{Rh1}
		\int_0^{r_0}\fint_0^{2\pi}\left|\frac12\left((\partial_r w)^2-B\right)\right|^2\dt r\dr\leq Ch^{-2\de}(h^{2+\beta-2\alpha}r_h^2+\ell^4).
	\end{equation}
	
	By contrast, the second term in the right-hand side of \eqref{xi2} is of order too large to be controlled directly. 
	
	\item \textbf{Definition of $u_r$ and construction of $u_{r,1}$.}
	To address this issue, we define
	\begin{equation}\label{eq:defur}
	u_r(r,\theta)\colonequals u_{r,1}(r,\theta)+u_{r,2}(r,\theta)\quad \mathrm{with}\ \bar u_{r,1}(r)=\bar u_{r,2}(r)=0\quad \forall r\in[0,r_0],
	\end{equation}
	where   
	$$
	u_{r,1}(r,\theta) \colonequals \frac rR w(r,\theta),
	$$ 
	and $u_{r,2}$ is defined in \eqref{def:ur2} below. Note that 
	$$
	\partial_r u_{r,1}-\frac rR \partial_r w=\frac wR.
	$$
	This choice of $u_{r,1}$ is crucial, since it cancels the $O(1)$ contribution $\frac{r}{R}\partial_\theta w$ appearing in $R_h^3$ (see \eqref{termR3} below).
	
	Moreover, using \eqref{eq:defw}, we deduce that
	$$
	\int_0^{r_0}\fint_0^{2\pi}\left|\partial_r u_{r,1}-\frac rR \partial_r w\right|^2\dt r \dr=\int_0^{r_0}\fint_0^{2\pi}\left|\frac wR\right|^2\dt r \dr\leq C h^{-\de}\factors,
	$$
	which, combined with \eqref{Rh1}, yields (recall \eqref{def:Rh1})
	\begin{equation}\label{R1}
		R_h^1\leq \int_0^{r_0}\fint_0^{2\pi}\left|\partial_r u_{r,2}\right|^2\dt r\dr + Ch^{-2\de}\left(h^{2+\beta-2\alpha}r_h^2+\ell^4+\factors\right).
	\end{equation}
	
	\item \textbf{Cancellation of $R_h^3$ via $u_{r,2}$.}
	We now turn to $R_h^3$, defined in \eqref{def:Rh3}. First, we observe that 
	\begin{equation}\label{termR3}
		\partial_\theta u_{r,1}+\partial_r\xi\partial_\theta \xi=\frac rR \partial_\theta w+ \partial_r w \partial_\theta w-\frac rR \partial_\theta w=\partial_r w \partial_\theta w.
	\end{equation}
	This term and $\partial_r \left(\frac{u_\theta}r\right)$ are both too large. 
	For the latter, the largeness stems from the fact that differentiating $u_\theta$ in $r$ cancels the good factors $\frac{1}{(k\pm j)N}$ present in \eqref{eq:defutheta}.
	To remedy this, we design $u_{r,2}$ so that
	\begin{equation}\label{equationforur2}
	\frac{\partial_\theta u_{r,2}}r+r\partial_r\left(\frac{u_\theta}r\right)+\frac1r\partial_r w \partial_\theta w=0,
		\end{equation}
	which immediately implies $R_h^3=0$.
	
	\item \textbf{Construction of $u_{r,2}$.}
	We proceed in two steps. First, set $U_r(r,\theta)=\frac12 (U_{r,+}-U_{r,-})$, where
	\begin{align*}
		U_{r,+}(r,\theta)&\colonequals r^2h^\de \sum_{k,j}\partial_r\left( A^2(r)\mask \maskj\right) \frac{\cos((k+j)N\theta)}{(k+j)^2N^2},\\
		U_{r,-}(r,\theta)&\colonequals r^2h^\de \sum_{k\neq j}\partial_r\left( A^2(r)\mask \maskj\right) \frac{\cos((k-j)N\theta)}{(k-j)^2N^2},
	\end{align*}
	and observe that
	$$
	\frac{\partial_\theta U_r}r+r\partial_r \left(\frac{u_\theta}r\right)=0.
	$$
	Next, using the identity
	$$
	2\cos(kN\theta)\sin(jN\theta)=\sin((k+j)N\theta)-\sin((k-j)N\theta),
	$$
	and recalling \eqref{defMk}, we obtain
	\begin{align*}
		\partial_r w\partial_\theta w(r,\theta)=&-h^\de \sum_{k,j}(\partial_rM_k(r)) M_j(r) \frac{\sin((k+j)N\theta)}{kN}\\
		&+h^\de \sum_{k\neq j}(\partial_rM_k(r)) M_j(r) \frac{\sin((k-j)N\theta)}{kN}.
	\end{align*}
	Finally, define $V_r(r,\theta)=V_{r,+}-V_{r,-}$, where
	\begin{align*}
		V_{r,+}(r,\theta)&\colonequals -h^\de \sum_{k,j}(\partial_rM_k(r)) M_j(r) \frac{\cos((k+j)N\theta)}{k(k+j)N^2},\\
		V_{r,-}(r,\theta)&\colonequals -h^\de \sum_{k\neq j}(\partial_rM_k(r)) M_j(r) \frac{\cos((k-j)N\theta)}{k(k-j)N^2},
	\end{align*}
	so that
	$$
	\partial_\theta V_r+\partial_r w\partial_\theta w=0.
	$$
	We finally set 
	\begin{equation}\label{def:ur2}
		u_{r,2}\colonequals U_r+V_r,
	\end{equation}
	which ensures that \eqref{equationforur2} holds.
	\end{enumerate}
	
	This completes the construction of $u_r$. 
	The first component $u_{r,1}$ cancels the $O(1)$ term in $R_h^3$, while the second component $u_{r,2}$ eliminates the remaining oscillatory contributions. 
	As a result, $R_h^3$ vanishes entirely and $R_h^1$ reduces to a form involving only $\partial_r u_{r,2}$. 
	In the next step we estimate $\partial_r u_{r,2}$ and combine all contributions to obtain the final bound for $R_h$.

	\item \textbf{Estimates for $\partial_r u_{r,2}$ and final bound for $R_h$.} 
	
	In this step we establish the quantitative bounds for the radial derivative of $u_{r,2}$, which is the decisive term in the control of $R_h$.  
	Recalling \eqref{R2} and observing that $|\partial_r u_{r,1}|$ is of lower order compared to $|\partial_r u_{r,2}|$, we have
	$$
	R_h^2\leq C\int_0^{r_0}\fint_0^{2\pi}\left|\partial_r u_{r,2}\right|^2\dt r\dr,
	$$
	which, combined with \eqref{R1} and the fact that $R_h^3=0$, implies
	\begin{equation}\label{R123}
		R_h^1+R_h^2+R_h^3\leq C\int_0^{r_0}\fint_0^{2\pi}\left|\partial_r u_{r,2}\right|^2\dt r\dr + Ch^{-2\de}(h^{2+\beta-2\alpha}r_h^2+\ell^4+\factors).
	\end{equation}
	In addition, from the definition of $u_{r,2}$ we deduce that
	\begin{equation}\label{step7eq1}
	\int_0^{r_0}\fint_0^{2\pi}\left|\partial_r u_{r,2}\right|^2\dt r\dr\leq  
	\int_0^{r_0}\fint_0^{2\pi}\left|\partial_r (U_{r,-}+V_{r,-})\right|^2\dt r\dr,
	\end{equation}
	that is, the terms on the right-hand side are the leading-order contributions to $|\partial_r u_{r,2}|$.
	
	\begin{enumerate}[label=\textsc{\bf Step \arabic{enumi}.\arabic*.},ref=\textsc{\arabic*},leftmargin=0pt,labelsep=*,itemindent=*,itemsep=10pt,topsep=10pt]
	\item \textbf{Estimate for $\partial_r U_{r,-}$.}  
	We begin with $\int_0^{r_0}\fint_0^{2\pi}\left|\partial_r U_{r,-}\right|^2\dt r\dr$.  
	We define
	$$
	H(r)\colonequals h^\de \sum_{k\neq j}\mask \maskj\frac{\cos((k-j)N\theta)}{(k-j)^2N^2}.
	$$
	By definition, we have
	$$
	\partial_r U_{r,-}(r,\theta) =\partial_r (r^2\partial_ r(A^2(r)))H(r)+2[r^2\partial_r (A^2(r))+rA^2(r)]\partial_r H(r)+r^2A^2(r)\partial_{rr}H(r).
	$$
	A straightforward computation shows that
	$$
	|r^2\partial_r (A^2(r))+rA^2(r)|+|r^2A^2(r)|\leq C
	$$
	and
	$$
	|\partial_r (r^2\partial_ r(A^2(r)))|\leq C(1+h^{-\alpha}r_h^3)\leq C(1+h^\frac{2-\beta}2h^{-\alpha}).
	$$
	Thus
	\begin{equation}\label{h0}
		|\partial_r U_{r,-}(r,\theta)|\leq C(1+h^\frac{2-\beta}2h^{-\alpha})|H(r)|+C\left(|\partial_r H(r)| +|\partial_{rr} H(r)|\right).
	\end{equation}
	Next, we note that  
	\begin{equation}\label{h1}
		|H(r)|\leq \frac{Ch^{-\de}}{N^2}.
	\end{equation}
	In addition, we observe that $H(r)$ is $\frac\factor{\alpha_s^\frac14 \ell} h^\de$-periodic. Then, a direct computation shows that, for any $n\in \N$,  
	$$
	\|m^{(n)}\|_\infty\leq (C_m n)^{(C_m' n)},
	$$
	which allows us to deduce that
	$$
	\|H^{(n)}\|_\infty \leq (C_m n)^{(C_m' n)} \frac{h^{-\de}}{N^2}\left(\frac{\alpha_s^\frac14 \ell }\factor\right)^n.
	$$
	By observing that, for any $n\geq 2$, $H^{(n-1)}$ has mean zero, due to the periodicity of $H^{(n-2)}$, we deduce that there exists $t_{n-1}\in \left(0,\frac\factor{\alpha^\frac14 \ell} h^\de\right)$ such that $H^{(n-1)}(t_{n-1})=0$. Therefore, by Taylor expansion, for any $s\in \left(0,\frac\factor{\alpha_s^\frac14 \ell} h^\de\right)$, we have
	$$
	|H^{(n-1)}(s)|\leq \|H^{(n)}\|_\infty \frac\factor{\alpha^\frac14 \ell }  h^\de\leq  (C_m n)^{(C_m' n)} \frac{h^{-\de}}{N^2}\left(\frac{\alpha^\frac14 \ell }\factor\right)^n\frac\factor{\alpha_s^\frac14 \ell} h^\de= (C_m n)^{(C_m' n)} \frac{1}{N^2}\left(\frac{\alpha^\frac14 \ell }\factor\right)^{n-1}.
	$$
	We can now do the same to estimate $H^{(n-2)}$ in terms of our updated bound on the supreme norm of $H^{(n-1)}$, which yields that for any $s\in \left(0,\frac\factor{\alpha_s^\frac14 \ell} h^\de\right)$, we have
	\begin{multline*}
		|H^{(n-2)}(s)|\leq \|H^{(n-1)}\|_\infty \frac\factor{\alpha^\frac14 \ell }  h^\de\leq  (C_m n)^{(C_m' n)} \frac{1}{N^2}\left(\frac{\alpha^\frac14 \ell }\factor\right)^{n-1}\frac\factor{\alpha_s^\frac14 \ell} h^\de\\
		= (C_m n)^{(C_m' n)} \frac{1}{N^2}\left(\frac{\alpha^\frac14 \ell }\factor\right)^{n-2}h^{\delta}.
	\end{multline*}
	Iterating this procedure, we obtain
	\begin{align*}
		|H'|+|H''|&\leq (C_m n)^{(C_m' n)}\frac 1{N^2}\left( \frac{\alpha_s^\frac14 \ell }\factor\right)^2 h^{\de(n-2)}\\
		&\leq (C_m n)^{(C_m' n)} \left(\frac{\ell^2}\factor\right)^2 h^{\de(n-4)}.
	\end{align*}
	Optimizing over $n$ on the right-hand side, that is, choosing  
	$$
	n=(C_m e^\frac1{C_m'})^{-1}h^{-\frac \de{C_m'}}
	$$  
	yields
	$$
	|H'|+ |H''|\leq C\left(\frac{\ell^2}\factor\right)^2h^{-4\de}\exp\left(-\frac{h^{-\frac{\de}{C_m'}}}{C_m}\right).
	$$
	Plugging in this and \eqref{h1} into \eqref{h0}, yields  
    \begin{equation}\label{step7eq2}
	\int_0^{r_0}\fint_0^{2\pi}\left|\partial_r U_{r,-}\right|^2\dt r\dr\leq C(1+h^{2-\beta-2\alpha})\ell^4h^{-6
		\de}+ C\left(\frac{\ell^2}\factor\right)^4h^{-8\de}\exp\left(-\frac{h^{-\frac{\de}{C_m'}}}{C_m}\right)^2.
	\end{equation}
	
	\item \textbf{Estimate for $\partial_r V_{r,-}$.}  
	We now estimate $\int_0^{r_0}\fint_0^{2\pi}\left|\partial_r V_{r,-}\right|^2\dt r\dr$.  
	Arguing similarly as above, one deduces that
	$$
	|\partial_r V_{r,-}|\leq C \left((Ar)^2|\partial_{rr}H|+|\partial_r((Ar)^2)||\partial_r H|+|\partial_{rr}((Ar)^2)||H|\right).
	$$
	Using that
	$$
	|(Ar)^2|+|\partial_r((Ar)^2)|\leq C,\quad |\partial_{rr}((Ar)^2)|\leq C(1+h^\frac{2-\beta}2h^{-\alpha}),
	$$
	and the estimates on $H$ and its derivatives, we deduce that
    \begin{equation}\label{step7eq3}
	\int_0^{r_0}\fint_0^{2\pi}\left|\partial_r V_{r,-}\right|^2\dt r\dr\leq  C(1+h^{2-\beta-2\alpha})\ell^4h^{-6\de}+ C\left(\frac{\ell^2}\factor\right)^4h^{-8\de}\exp\left(-\frac{h^{-\frac{\de}{C_m'}}}{C_m}\right)^2.
	\end{equation}
	
	\item \textbf{Final bound.}  
	By combining \eqref{step7eq1}, \eqref{step7eq2}, and \eqref{step7eq3} with \eqref{R123}, we deduce that
	\begin{multline}\label{Rh123}
		R_h^1+R_h^2+R_h^3\leq C(1+h^{2-\beta-2\alpha})\ell^4h^{-6\de}+ C\left(\frac{\ell^2}\factor\right)^4h^{-8\de}\exp\left(-\frac{h^{-\frac{\de}{C_m'}}}{C_m}\right)^2 \\ +Ch^{-2\de}(h^{2+\beta-2\alpha}r_h^2+\ell^4+\factors).
	\end{multline}
	\end{enumerate}
	
	\item \textbf{Choice of the parameters $\alpha,\ell$, and $\de$.}
	We now reach the final stage of the proof, where the parameters $\alpha,\ell,\de$ are fixed according to the regime of $\beta$.  
	
	\begin{enumerate}[label=\textsc{\bf Case},leftmargin=0pt,labelsep=*,itemindent=*,itemsep=10pt,topsep=10pt]
	\item {$\bm{\frac23 \leq \beta \leq 2.}$}
	To fix $\ell$, we follow the same strategy as in the proof of the lower bound and match the terms depending on $\ell$ on the right-hand sides of \eqref{upper1} and \eqref{sigmaB}, that is
	$$
	\frac{h^2}{\ell^2}=h^\frac{2-\beta}2\ell^2,
	$$
	which gives $\ell=h^\frac{2+\beta}8$. In particular, this implies that
	$$
	\frac{h^2}{\ell^2}=h^\frac{2-\beta}2\ell^2=\expo.
	$$  
	Next, we choose $\alpha$ by considering the largest $\alpha$-dependent term in the estimate for $R_h$, namely $h^{2-\beta-2\alpha}\ell^4h^{-2\delta}$, and set $\alpha$ so that (up to a constant)
	$$
	h^{2-\beta-2\alpha}\ell^4=\expo,
	$$
	that is $\alpha= \frac{6-\beta}{8}$. This choice in particular ensures that $h^\alpha \ll r_h$, and moreover
	$$
	(h^\alpha r_h)^3+h^{2-\alpha}r_h+h^\frac{2+\beta}2h^{2-3\alpha}r_h+h^{2+\beta-2\alpha}r_h^2\leq C\expo.
	$$
	
	Therefore, we deduce from \eqref{upper1}, \eqref{sigmaB}, and \eqref{B2} that
	\begin{equation}\label{fin1}
		\int_0^{r_0} \Bigg[2\sigma_h^0B+B^2+W_r\left(\frac{\u_h^0}r,w\right) -W_\rel\left(\frac{\u_h^0}r\right) \Bigg]r\dr\leq C\left(\expo +\exp\left(-\frac{h^{-\frac{\de}{C_m'}}}{C_m}\right)^2\right).
	\end{equation}
	On the other hand, from \eqref{Rh4}, \eqref{Rh5}, and \eqref{Rh123}, we deduce that
	\begin{equation}\label{fin2}
		R_h\leq C\left(h^{-6\de}\expo +h^{-8\de}\exp\left(-\frac{h^{-\frac{\de}{C_m'}}}{C_m}\right)^2\right).
	\end{equation}
	Finally, we choose $\de$ in such a way that
	$$
	\expo=\exp\left(-\frac{h^{-\frac{\de}{C_m'}}}{C_m}\right)^2.
	$$
	This implies
	$$
	h^{-\de}=\left(C_m\log \frac1{h^\frac{6-\beta}8} \right)^{C_m'},
	$$
	that is
	$$
	\de=\dfrac{\log \left(C_m\log \frac1{h^\frac{6-\beta}8} \right)^{C_m'}}{\log \frac1h}.
	$$
	Hence, inserting this into \eqref{fin1} and \eqref{fin2}, and combining with \eqref{difEnergies}, we obtain
	$$
	E_h(u,w)-F_h^0(\u_h^0)\leq \left(C_m\log \frac1{h^\frac{6-\beta}8} \right)^{8C_m'}\expo.
	$$
	This completes the proof of the upper bound \eqref{toproveupperbound} in the regime $\frac23\leq \beta\leq 2$.
	
	\item {$\bm{0 <\beta < \frac23.}$}  
	Following once again the proof of the lower bound, we fix $\ell$ by matching the terms depending on $\ell$ on the right-hand sides of \eqref{upper1} and \eqref{B2}, that is
	$$
	\frac{h^2}{\ell^2}=\ell^4,
	$$
	which gives $\ell=h^\frac13$. In particular, this implies that
	$$
	\frac{h^2}{\ell^2}=\ell^4=\expos.
	$$
	We next determine $\alpha$ by considering the largest $\alpha$-dependent term in the estimate for $R_h$, namely $h^\frac{2+\beta}2h^{2-3\alpha}r_h$, and set $\alpha$ so that (up to a constant)
	$$
	h^\frac{2+\beta}2h^{2-3\alpha}r_h=\factors,
	$$
	that is $\alpha=\frac{14-\beta}{18}$. This choice ensures that $h^\alpha \ll r_h$, and moreover
	\begin{equation}\label{eqalpha}
		(h^\alpha r_h)^3+h^{2-\alpha}r_h+h^{2-\beta-2\alpha}\ell^4+h^{2+\beta-2\alpha}r_h^2\leq C\factors.
	\end{equation}
	
	We then deduce from \eqref{upper1}, \eqref{sigmaB}, and \eqref{B2} that
	\begin{equation*}
		\int_0^{r_0} \Bigg[2\sigma_h^0B+B^2+W_r\left(\frac{\u_h^0}r,w\right) -W_\rel\left(\frac{\u_h^0}r\right) \Bigg]r\dr\leq C\left(\factors +\exp\left(-\frac{h^{-\frac{\de}{C_m'}}}{C_m}\right)^2\right).
	\end{equation*}
	On the other hand, from \eqref{Rh4}, \eqref{Rh5}, and \eqref{Rh123}, we deduce that
	$$
	R_h\leq C\left(h^{-6\de}\factors +h^{-8\de}\exp\left(-\frac{h^{-\frac{\de}{C_m'}}}{C_m}\right)^2\right).
	$$
	Note that the term $\factors$, which for $\beta<\frac23$ dominates $h^\frac43$, essentially arises from the contribution $|w|^2$ in $R_h^1$. We were not able to eliminate it, and we believe that in the case $0\leq \beta<\frac23$ it is indeed the leading-order term in the expansion of $E_h(u,w)-F_h^0(\u_h^0)$. This fact actually drives the choice of $\alpha$ in \eqref{eqalpha}. In fact, if $\factors$ had not appeared in our energy estimates, we would have taken the right-hand side of \eqref{eqalpha} as $\expos$, analogously to the previous case.
	
	Since in our upper bound construction the only place where $\factors$ appears is in $R_h$, providing a matching lower bound would require extracting, for a minimizing configuration, a term of the same order from $R_h$, which we do not know how to do.  
	
	It is worth noticing that $\factors=h^\frac43$ for $\beta=\frac23$, and that $\factors \ll h^\frac{6-\beta}4$ if $\frac23<\beta\leq 2$, which is why this term is of lower order in the previously considered case.  
	
	\medskip
	Finally, we choose $\de$ in such a way that
	$$
	\factors=\exp\left(-\frac{h^{-\frac{\de}{C_m'}}}{C_m}\right)^2.
	$$
	This implies
	$$
	h^{-\de}=\left(C_m\log \frac1\factor \right)^{C_m'},
	$$
	that is
	$$
	\de=\dfrac{\log \left(C_m\log \frac1\factor \right)^{C_m'}}{\log \frac1h}.
	$$
	Hence, we deduce that
	$$
	E_h(u,w)-F_h^0(\u_h^0)\leq \left(C_m\log \frac1\factor \right)^{8C_m'}\factors.
	$$
	This completes the proof of the upper bound \eqref{toproveupperbound} in the regime $0<\beta< \frac23$.
	\end{enumerate}	
\end{enumerate}
\end{proof}

\section*{Data availability statement}
No datasets were generated or analyzed during the current study.
\section*{Conflict-of-interest statement}
The authors declare that they have no conflicts of interest.
\bibliographystyle{abbrv}
\bibliography{bella}

\end{document}